\documentclass[12pt,a4paper]{amsart}

\usepackage[left=25truemm,right=25truemm,top=30truemm,bottom=30truemm]{geometry}

\usepackage{mathtools}
\usepackage{amssymb}
\usepackage{thmtools}
\usepackage{mathrsfs}

\usepackage[dvipdfmx]{graphicx}
\usepackage{subcaption}
\usepackage{float}
\usepackage[dvipsnames]{xcolor}
\usepackage{framed}
\usepackage[all]{xy}
\usepackage{listings}
\usepackage{setspace}
\usepackage[english]{babel}
\usepackage{amscd}

\usepackage{tikz}
\usetikzlibrary{trees}

\usepackage{hyperref}
\usepackage{cleveref}

\theoremstyle{plain}
\newtheorem{thm}{\textrm{Theorem}}[section]
\newtheorem{lem}[thm]{\textrm{Lemma}}
\newtheorem{cor}[thm]{\textrm{Corollary}}
\newtheorem{prop}[thm]{\textrm{Proposition}}

\theoremstyle{definition}

\newtheorem{defi}[thm]{\textrm{Definition}}
\newtheorem{rem}[thm]{\textrm{Remark}}

\DeclareMathOperator{\Diff}{\mathop{\rm{Diff}}}
\DeclareMathOperator{\Int}{\mathop{\rm{Int}}}
\DeclareMathOperator{\rank}{\mathop{\rm{rank}}}
\Crefname{figure}{Figure}{Figures}

\numberwithin{equation}{section}

\newcommand{\id}{{\rm{id}}}

\title{\small On submersions with definite folds\\ of manifolds with boundary into Euclidean spaces}

\author[K. Iwakura]{Koki Iwakura}
\address{Joint Graduate School of Mathematics for Innovation, Kyushu University, Motooka 744, Nishiku, Fukuoka 819-0395, Japan.}
\email{iwakura.koki0105@gmail.com}

\date{}

\begin{document}

\maketitle

\begin{abstract}
Submersions with definite folds are submersions on manifolds with boundary whose restrictions to the boundary are definite fold maps. 
In this paper, we study differential-topological properties of manifolds with boundary admitting such maps into Euclidean spaces. 
When the target is $\mathbb{R}$, we obtain restrictions on the diffeomorphism types of the source manifolds by using results for m-functions. 
For targets of dimension greater than one, we focus on the cases where the restrictions to the boundary are round fold maps or image simple fold maps, both defined by conditions on the singular value set. 
Then, we study the diffeomorphism types and Euler characteristics of manifolds admitting such maps. 
These results have applications to non-singular extensions of definite fold maps.
\end{abstract}

\section{Introduction}
It has been shown in several previous studies that singular points of maps on manifolds with boundary affect the global structures of the source manifolds, such as their topological types and smooth structures; see, for example, \cite{JR, Bra, Haj, BNR, SY, Yam}. 
However, many of these studies use arguments that depend essentially on the dimensions of the source and target manifolds. 
The present paper takes a complementary approach. 
Rather than imposing restrictions on these dimensions, we restrict the types of singular points admitted for maps. 
Following this approach, in \cite{Iwa3}, we studied boundary special generic maps on manifolds with boundary. 
A boundary special generic map is a submersion whose restriction to the boundary is a definite fold map, and it satisfies an additional condition on the behavior of a collar neighborhood of the boundary around the singular points.
This additional condition strongly constrains the structure of the source manifold.
Therefore, in order to clarify the relationship between singular points and a broader class of manifolds with boundary, it is natural to remove this condition.
In this paper, we introduce and study the resulting class of maps, called submersions with definite folds, which generalizes boundary special generic maps.

However, for submersions with definite folds, the methods developed for boundary special generic maps cannot be applied directly. 
Indeed, for boundary special generic maps, the Reeb space, which is the quotient space obtained by collapsing each connected component of each fiber, is known to be a manifold; see \cite{Iwa3}.
In \cite{Iwa3}, this property played a central role in studying the differential-topological properties of manifolds with boundary admitting boundary special generic maps.
By contrast, the Reeb spaces of submersions with definite folds need not be manifolds in general.
Thus, this broader class has to be studied from a different viewpoint. 
In this paper, we study submersions with definite folds in two cases: the case where the target is $\mathbb{R}$ and the case where the target has dimension greater than one. 
In the former case, we use known results on m-functions to study manifolds with boundary admitting such functions.
In the latter case, we focus on the restrictions to the boundary whose singular value sets satisfy additional conditions, namely those that are round fold maps or image simple fold maps.

Our results on submersions with definite folds also have applications to the study of non-singular extensions of definite fold maps.
A non-singular extension of a given map on a closed manifold is a submersion whose restriction to the boundary realizes the given map.
The problem of determining when a given map on a closed manifold admits a non-singular extension is called the non-singular extension problem, and it has been studied in various contexts; see \cite{Cur, Sei, Iwa1, Iwa2, Iwa3}.
One of the main difficulties of this problem is that one has to deal simultaneously with both the singular points of the given map and those of its extension. 
Accordingly, many previous studies have focused on maps between manifolds of specific dimensions and have made essential use of properties specific to those dimensions.
Thus, many known results are available only in specific dimensional settings, and a more general picture is still not fully understood.
In \cite{Iwa3}, by using properties of boundary special generic maps, whose restrictions to the boundary are definite fold maps, we established several dimension-independent results on non-singular extensions of definite fold maps.
In this paper, we also obtain results on non-singular extensions of definite fold maps by using properties of submersions with definite folds; see Corollaries~3.6 and~3.17.

We now explain the organization of this paper. 
In Section~2, we review basic terminology from singularity theory of maps and introduce submersions with definite folds, which are the central objects of this paper.
Section~3 is devoted to the study of these maps.
In Section~3.1, we focus on the case where the target is $\mathbb{R}$.
In dimension two, we show that any compact, connected surface can occur as the source manifold of such a map, while, in higher dimensions, the diffeomorphism type of the source manifold is determined. 
In Section~3.2, we consider targets of dimension greater than one. 
We focus on the cases where the restrictions to the boundary are round fold maps or image simple fold maps. 
In Section~3.2.1, we study the case of maps from $3$-dimensional manifolds to the plane whose restrictions to the boundary are round fold maps. 
We show that, if the monodromy of the surface bundle over $S^1$ associated with such a map is periodic under suitable conditions on the fibers, the source manifold is a graph manifold. 
When the monodromy is the identity, we describe the corresponding plumbing graph.
In Section~3.2.2, we treat the image simple fold maps in arbitrary dimensions. 
Using a tree called the target graph associated with such a map, we give a method for computing the Euler characteristic of the source manifold. 
Throughout this paper, all manifolds and maps are assumed to be of class $C^\infty$.
The boundary of an oriented manifold is oriented by the outward-first convention.

\section{Preliminaries}
In this section, we introduce submersions with definite folds, which are the main objects of this paper. 
We begin by reviewing singular points of maps. 
For more details, see \cite{GG}.

\begin{defi}[\textbf{Singular point}]\label{def2.1}
Let $M$ be an $m$-dimensional manifold, possibly with boundary, let $K$ be a $k$-dimensional manifold, where $m\geq k$, and let $f\colon M\to K$ be a map.  
A point $p\in M$ is called a \emph{singular point} of $f$ if $df_p$ is not surjective, that is, $\rank df_p<k$; otherwise, $p$ is called a \emph{regular point} of $f$. 
A point $q\in K$ is called a \emph{singular value} of $f$ if $q=f(p)$ for some singular point $p$ of $f$; otherwise $q$ is called a \emph{regular value} of $f$. 
In particular, $f$ is called a \emph{submersion} if every point of $M$ is a regular point of $f$. 
\end{defi}

The subset $S(f)$ of $M$ consisting of all singular points of $f$ is called the \emph{singular point set} of $f$; namely, 
$$
S(f)=\{p\in M\mid \rank df_p<k\}. 
$$
The image $f(S(f))\subset K$ is called the \emph{singular value set} of $f$.

Singular points of maps are often characterized by local normal forms. 
In this paper, we focus on definite fold points. 
We first recall fold points, of which definite fold points are a special case.

\begin{defi}[\textbf{Fold point}]\label{def2.2}
Let $M$ be an $m$-dimensional closed manifold, let $K$ be a $k$-dimensional manifold, where $m\geq k$, and let $f\colon M\to K$ be a map.  
A point $p\in S(f)$ is called a \emph{fold point} of $f$ if there exist local coordinates $(x_1,\dots,x_m)$ of $M$ around $p$ and $(y_1,\dots,y_k)$ of $K$ around $f(p)$ such that
$$
f; (x_1,\dots,x_m)\mapsto (y_1,\dots,y_k)=(x_1,\dots,x_{k-1},\sum_{i=k}^{m}\pm x_i^2), 
$$
where the signs for $x_i^2$ may be chosen independently. 
A map $f$ is called a \emph{fold map} if all points of $S(f)$ are fold points. 
A fold point is called \emph{definite fold point} if all signs are $``+"$ or all signs are $``-"$. 
A fold map whose singular points are all definite fold points is called a \emph{definite fold map} (or a \emph{special generic map}). 
\end{defi}

We now introduce submersions with definite folds based on this definition. 
These maps are defined on manifolds with boundary.

\begin{defi}[\textbf{Submersion with definite folds}]\label{def2.3}
Let $N$ be a compact, connected, $n$-dimensional manifold with boundary, and let $K$ be a $k$-dimensional manifold, where $n>k$. 
A map $F\colon N\to K$ is called a \emph{submersion with definite folds} if $F$ is a submersion and $F|_{\partial N}$ is a definite fold map. 
\end{defi}

In this paper, we consider only submersions with definite folds whose target is a Euclidean space.

The following proposition gives the local normal form of submersions with definite folds around a singular point of the restriction to the boundary. 
Since the proof is similar to that in Shibata~\cite{Shi}, we omit it.

\begin{prop}\label{prop2.4}
Let $N$ be a compact, connected, $n$-dimensional manifold with boundary and let $F\colon N\to\mathbb{R}^k$ be a submersion with definite folds, where $n>k$. 
Then, for any point $p\in S(F|_{\partial N})$, there exist local coordinates $(x_1,\dots,x_n)$ of $N$ around $p$ and $(y_1,\dots,y_k)$ of $\mathbb{R}^k$ around $F(p)$ such that 
$$
F; (x_1,\dots,x_n)\mapsto (y_1,\dots,y_k)=(x_1,\dots,x_{k-1},\sum_{i=k}^{n-1}x_i^2\pm x_n), 
$$
where the regions $x_n>0$ and $x_n=0$ correspond to $\Int N$ and $\partial N$, respectively. 
\end{prop}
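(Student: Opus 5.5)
Since $F$ is a submersion at $p\in\partial N$ while $F|_{\partial N}$ has a definite fold at $p$, the kernel of $d(F|_{\partial N})_p$ has dimension $n-k$. Hence $\rank d(F|_{\partial N})_p=k-1$, and $dF_p$ maps the inward normal direction onto the missing direction in $\mathbb{R}^k$. The plan has three steps: normalize $F|_{\partial N}$ on the boundary, extend the boundary coordinates to a collar, and show that the last component, viewed as a function of the normal coordinate, can itself be taken as that coordinate.

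\emph{Step 1 (boundary normal form).} By Definition~\ref{def2.2} applied to $F|_{\partial N}$, choose coordinates $(x_1,\dots,x_{n-1})$ on $\partial N$ near $p$ and $(y_1,\dots,y_k)$ on $\mathbb{R}^k$ near $F(p)$ such that
\[
F|_{\partial N}(x_1,\dots,x_{n-1})=\Bigl(x_1,\dots,x_{k-1},\sum_{i=k}^{n-1}x_i^2\Bigr).
\]
Replacing $y_k$ by $-y_k$ if needed, we may assume all signs are $+$. The sign $\pm$ in front of $x_n$ in the final formula will absorb this choice.

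\emph{Step 2 (collar coordinates).} Extend $(x_1,\dots,x_{n-1})$ to coordinates $(x_1,\dots,x_{n-1},t)$ on $N$ near $p$ with $t\ge 0$, $\{t=0\}=\partial N$ and $t>0$ in $\Int N$. The first $k-1$ components $F_j$ of $F$ agree with $x_j$ only on $t=0$. Since $(F_1,\dots,F_{k-1},x_k,\dots,x_{n-1},t)$ has invertible Jacobian at $p$, we can replace $x_j$ by $F_j$ for $j\le k-1$. This is a new coordinate system of the same type: it preserves $\{t=0\}$ and leaves the boundary coordinates unchanged on $\partial N$. Now $F=(x_1,\dots,x_{k-1},g)$ with
\[
g(x,0)=\sum_{i=k}^{n-1}x_i^2 .
\]
Because $F$ is a submersion at $p$ and $\partial g/\partial x_i(p)=0$ for all $i\le n-1$, we must have $\partial g/\partial t(p)\ne 0$.

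\emph{Step 3 (straightening the normal direction).} Write $g(x,t)=g(x,0)+t\,h(x,t)$ by Hadamard's lemma, where $h(p)=\partial g/\partial t(p)\neq0$. Set $x_n=t\,|h(x,t)|$ and keep $x_1,\dots,x_{n-1}$. This is a coordinate change near $p$ because $\partial x_n/\partial t(p)=|h(p)|\ne0$. It preserves $\{x_n=0\}=\partial N$ and $\{x_n>0\}=\Int N$. In these coordinates
\[
g=\sum_{i=k}^{n-1}x_i^2\pm x_n,
\]
with $\pm$ the sign of $h(p)$, possibly flipped according to the choice made in Step 1. This gives the stated normal form.

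The main point to check is Step 3. In Shibata's setting one would use a target change, such as $y_k\mapsto y_k-(\ldots)$, but that is unavailable because the correction term depends on the fiber directions $x_k,\dots,x_{n-1}$. The Hadamard factorization avoids this: it puts all the $t$-dependence into the single normal coordinate $x_n$, and the orientation condition $x_n>0$ on $\Int N$ is preserved because we use $|h|$.
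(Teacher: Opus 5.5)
Your argument is correct and complete. The paper gives no proof of its own here: it omits the argument as being similar to Shibata's. Your collar-coordinate construction is therefore a self-contained proof, not a variant of one written out in the paper.

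The essential checks go through:
\begin{itemize}
\item The Jacobian in Step~2 is triangular with ones on the diagonal.
\item The submersion condition forces $\partial g/\partial t(p)\neq 0$, because $dg_p$ must lie outside the span of $dx_1,\dots,dx_{k-1}$.
\item The map $(x,t)\mapsto(x,t\,|h(x,t)|)$ carries $\{t>0\}$ onto $\{x_n>0\}$, since $|h|>0$ near $p$. To invoke the inverse function theorem you should extend $g$ smoothly across $t=0$ first.
\end{itemize}

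A few small remarks. Your new coordinate is simply $x_n=\pm\bigl(g(x,t)-g(x,0)\bigr)$, so Hadamard's lemma is only a convenient way of verifying its properties. The phrase ``possibly flipped according to the choice made in Step~1'' is superfluous: once $y_k$ has been replaced by $-y_k$, $g$ is defined with the new coordinate and the sign is just that of $h(p)$. The speculation about Shibata's method is not needed.

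What your route makes transparent is that definiteness enters only through the sign normalization in Step~1. If you skip that flip, you get the form $\pm\sum_{i=k}^{n-1}x_i^2\pm x_n$ with an orientation-compatible target coordinate. That is the version Section~3.1 actually uses to define $(\lambda,\sigma)$. The same three steps also give $(x_1,\dots,x_{k-1},\sum\pm x_i^2\pm x_n)$ at any boundary fold point, definite or not.
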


\begin{rem}\label{rem2.5}
(1) Although submersions with definite folds are defined above in terms of definite fold maps on the boundary, we may equivalently define them as submersions with the local normal forms given in \Cref{prop2.4} around the singular points of the maps on the boundary. 
This viewpoint shows that submersions with definite folds are a generalization of boundary special generic maps studied in \cite{Shi, Iwa3}. 
\\
(2) When $\dim N-\dim K=1$, a submersion with definite folds is simply a submersion whose restriction to the boundary is a fold map. 
\end{rem}

The results on submersions with definite folds can be applied to the study of non-singular extensions of definite fold maps. 
We next define this notion.

\begin{defi}[\textbf{Non-singular extension}]\label{def2.6}
Let $M$ be an $m$-dimensional closed manifold and let $f\colon M\to\mathbb{R}^k$ be a map, where $m\geq k$. 
Suppose that there exist a compact, connected $(m+1)$-dimensional manifold $N$ with $\partial N=M$ and a submersion $F\colon N\to\mathbb{R}^k$ that makes the following diagram commutative: 
$$
\begin{xy}
\xymatrix{
  M \ar[r]^{f} \ar[d]_{i} & \mathbb{R}^k \\
  N \ar[ru]_{F} &
}
\end{xy}
$$
where $i$ denotes the natural inclusion.
Then, $F$ is called a \emph{non-singular extension} of $f$. 
\end{defi}

Every submersion with definite folds is a non-singular extension of the definite fold map obtained by restricting it to the boundary.

\begin{rem}
In \cite{Iwa3}, we assume that the source manifold of the given map is connected. 
In contrast, no such assumption is needed in \Cref{def2.6}.
\end{rem}

\section{Properties of submersions with definite folds}
In this section, we study the properties of submersions with definite folds according to the dimension of the target. 
Throughout this section, let $N$ be a compact, connected, $n$-dimensional manifold with boundary and $F\colon N\to\mathbb{R}^k$ be a submersion with definite folds, where $n>k$.

\subsection{The case for functions}
We first consider the case where $k=1$. 
In this case, submersions with definite folds are a special case of m-functions studied in \cite{Bra, Haj, JR}, and this fact is used later.   
As stated in \Cref{rem2.5}, when $n=2$, such functions are just submersions whose restrictions to the boundary are Morse functions. 
Thus, we divide the discussions into the cases $n=2$ and $n\geq 3$.

\subsubsection{The case $n=2$}
We first consider the case where $N$ is a surface with boundary. 
In this case, we show that the existence of a submersion with definite folds imposes no restriction on the topological type of $N$ as follows.

\begin{prop}\label{prop3.1}
Any compact, connected surface with boundary admits a submersion with definite folds into $\mathbb{R}$. 
\end{prop}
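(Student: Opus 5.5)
Every compact connected surface with nonempty boundary is obtained from a disk by attaching 1-handles, so it deformation retracts onto a graph. I will use this to immerse it into the plane and take a generic height function; the only real work is checking that this height function has no critical points in the interior.

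First, realize $N$ as a codimension-zero immersed surface in $\mathbb{R}^2$. Attach the 1-handles to a disk $D^2\subset\mathbb{R}^2$ as thin bands in the plane. For the orientable case, I will also allow the bands to overlap one another, so the result is an immersion rather than an embedding. A half-twisted band cannot be made flat in $\mathbb{R}^2$. I expect this to be the main obstacle, and I would handle it using the standard fact that every compact surface with nonempty boundary is parallelizable, because it is homotopy equivalent to a graph and the obstruction to a nonvanishing vector field lies in $H^2(N)=0$.

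Since $TN$ is trivial, the Hirsch--Smale theory for open manifolds (or Gromov's h-principle for immersions of open manifolds in the same dimension) gives an immersion of $\Int N$, and hence of a slightly larger open surface containing $N$, into $\mathbb{R}^2$. This produces an immersion $\iota\colon N\to\mathbb{R}^2$ whose derivative is an isomorphism at every point. For the orientable case there is also an explicit alternative: draw the band diagram with all bands flat, overlapping where needed. The nonorientable case needs the h-principle, since $N$ still has trivial tangent bundle even though it is nonorientable.

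Second, take a linear function $\ell\colon\mathbb{R}^2\to\mathbb{R}$ and set $F=\ell\circ\iota$. Since $d\iota_p$ is an isomorphism at every point $p$ and $\ell$ is a nonzero linear map, $dF_p$ is surjective everywhere. Hence $F$ is a submersion on all of $N$, including the boundary.

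Third, arrange that $F|_{\partial N}$ is Morse. The restriction of $F$ to the boundary is the height function $\ell$ along the immersed closed curve $\iota|_{\partial N}$. For a generic direction $\ell$ (an open dense set), this function is Morse, by the standard genericity of height functions on immersed curves. After a small perturbation of $\iota$ if necessary, the restriction is Morse.

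By \Cref{rem2.5}(2), when $n=2$ a Morse boundary restriction is exactly the condition that $F$ be a submersion with definite folds. This completes the argument. The only nontrivial input is the existence of the immersion $\iota$, which I expect to be the main obstacle, particularly in the nonorientable case.
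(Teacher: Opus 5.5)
\textbf{The nonorientable case fails.} Your argument is fine for orientable $N$. There $TN$ is trivial, flat bands or Hirsch's theorem give a codimension-zero immersion into $\mathbb{R}^2$, and a generic linear projection finishes; this is a tidy alternative to the paper's piece-by-piece gluing. But it breaks down for every nonorientable $N$, and the statement covers those (the paper handles them with the cross-cap pieces $F^{(2)}_i$).

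Your claim that a nonorientable $N$ still has trivial tangent bundle is false, because a global frame would orient $N$. For the M\"obius band, $TN$ restricted to the core circle is $\varepsilon\oplus\gamma$ with $\gamma$ the M\"obius line bundle, so $w_1(TN)\neq 0$. Your obstruction computation only removes the obstruction to a single nowhere-zero vector field, which lies in $H^2(N;\mathbb{Z}^{w})=0$ with twisted coefficients. That yields a splitting $TN\cong\varepsilon\oplus L$ with $L$ possibly nontrivial, not a framing.

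Worse, the map $\iota$ you want cannot exist at all. If $d\iota_p$ were an isomorphism at every $p$, pulling back the standard orientation of $\mathbb{R}^2$ would orient $N$. So no h-principle can supply it, and the half-twisted band you flagged is a genuine obstruction, not an artifact of drawing in the plane.

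\textbf{How to repair it.} A submersion to $\mathbb{R}$ needs only a nowhere-vanishing $1$-form, which is exactly what the splitting $TN\cong\varepsilon\oplus L$ gives. So construct $F$ directly instead of factoring it through an immersion. Any of the following works:
\begin{enumerate}
\item Apply Phillips' submersion theorem to the open surface $N\cup_{\partial N}(\partial N\times[0,1))$ and restrict to $N$.
\item Use the elementary fact that a compact connected manifold with nonempty boundary has a function without critical points. Take a Morse function and cut out thin neighbourhoods of disjoint arcs joining the interior critical points to $\partial N$; this does not change the diffeomorphism type.
\item Build $F$ handle by handle, as the paper does.
\end{enumerate}

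Your third step must change as well, since there is no longer a linear direction to vary. Perturb $F|_{\partial N}$ to a nearby Morse function and extend the perturbation into a collar of $\partial N$ with a cutoff. The new map is $C^1$-close to $F$, and being a submersion is a $C^1$-open condition on the compact $N$, so it is still a submersion. \Cref{rem2.5}(2) then concludes.
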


\proof
We construct such a submersion with definite folds from $N$ to $\mathbb{R}$, where $N$ is obtained by removing $b$ open disks from a closed surface given by the connected sum of $\Sigma_g$ and $s$ copies of $\mathbb{R}P^2$, where $g\geq 0$, $b\geq 1$, and $s\geq 0$. 
We consider the maps $F^{(j)}_{i}\colon N^{(j)}_i\to\mathbb{R}$ for $j=1,2,3$, $F^{(0)}\colon N^{(0)}\to\mathbb{R}$, and $F^{(4)}\colon N^{(4)}\to\mathbb{R}$ as \Cref{fig1}. 
In \Cref{fig1}, the number $i$ in (B) ranges from $0$ to $g-1$, $i$ in (C) ranges from $0$ to $s-1$, and $i$ in (D) ranges from $0$ to $b-1$. 
Arranging these maps according to the corresponding intervals in $\mathbb{R}$, we obtain a map 
$$
F\colon N^{(0)}\cup\Big(\bigcup_{i=1}^{g}N_i^{(1)}\Big)\cup\Big(\bigcup_{i=1}^{s}N_i^{(2)}\Big)\cup\Big(\bigcup_{i=1}^{b-1}N_i^{(3)}\Big)\cup N^{(4)}\to\mathbb{R}.
$$ 
By this construction, the source manifold of $F$ is diffeomorphic to $N$ and $F$ is a submersion with definite folds.
\begin{figure}[t]
\centering

\makebox[\textwidth][c]{
  \begin{subfigure}[b]{0.38\textwidth}
    \centering
    \includegraphics[width=\textwidth]{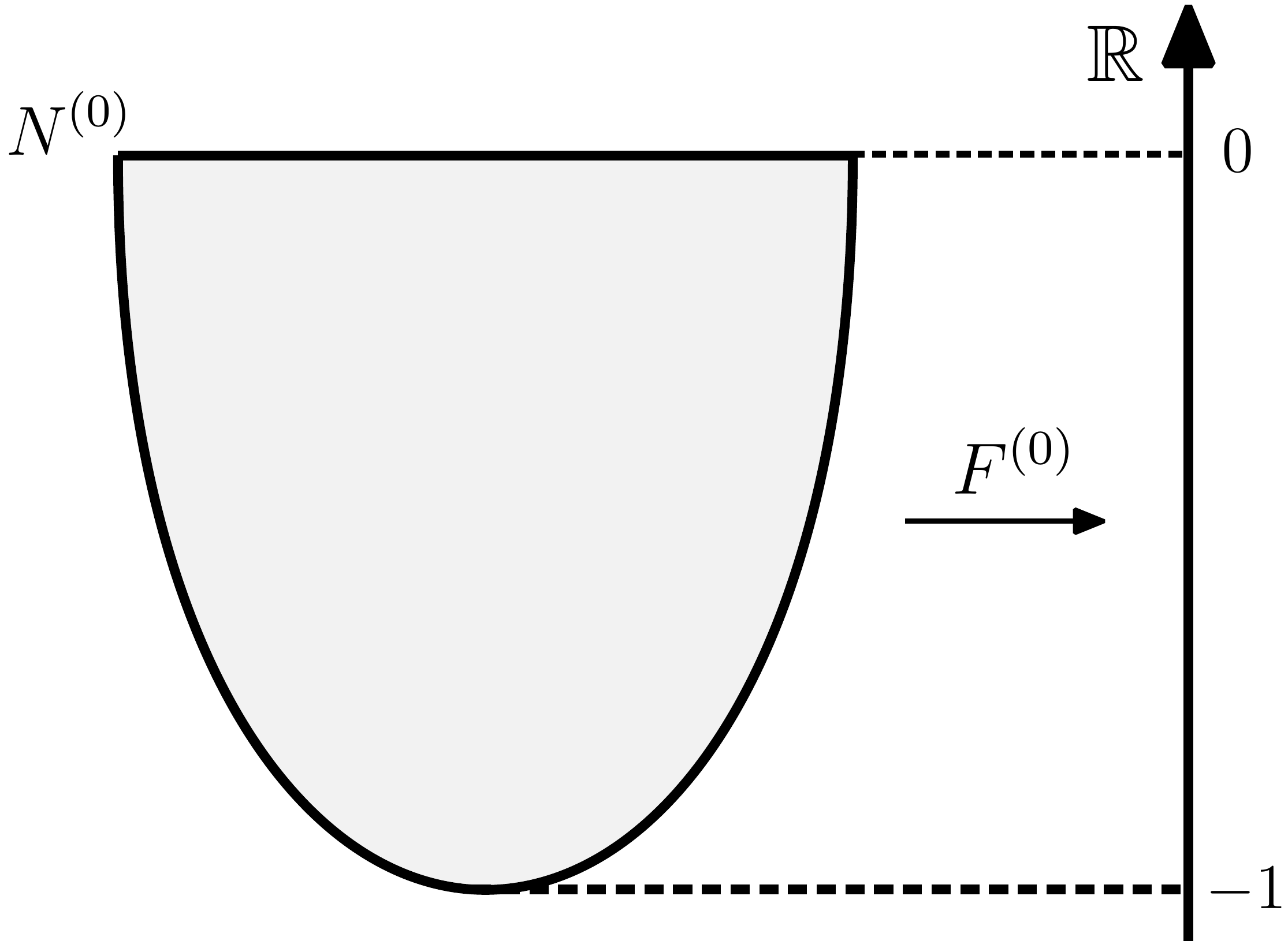}
    \caption{The map $F^{(0)}\colon N^{(0)}\to\mathbb{R}$.}
    \label{fig:1}
  \end{subfigure}
  \hspace{8mm}
  \begin{subfigure}[b]{0.4\textwidth}
    \centering
    \includegraphics[width=\textwidth]{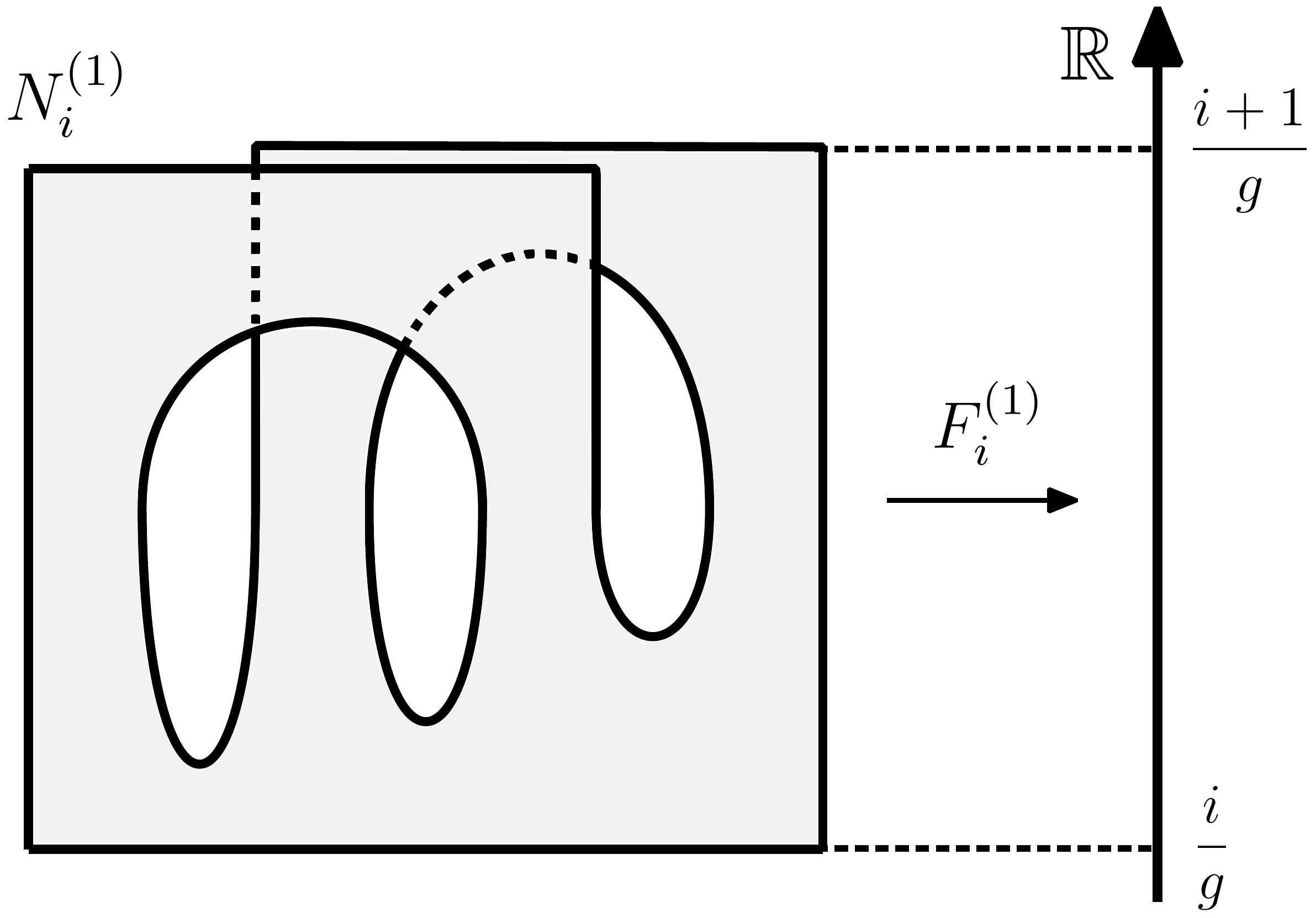}
    \caption{The map $F^{(1)}_i\colon N^{(1)}_i\to\mathbb{R}$.}
    \label{fig:2}
  \end{subfigure}
}

\vspace{3mm}

\makebox[\textwidth][c]{
  \begin{subfigure}[b]{0.38\textwidth}
    \centering
    \includegraphics[width=\textwidth]{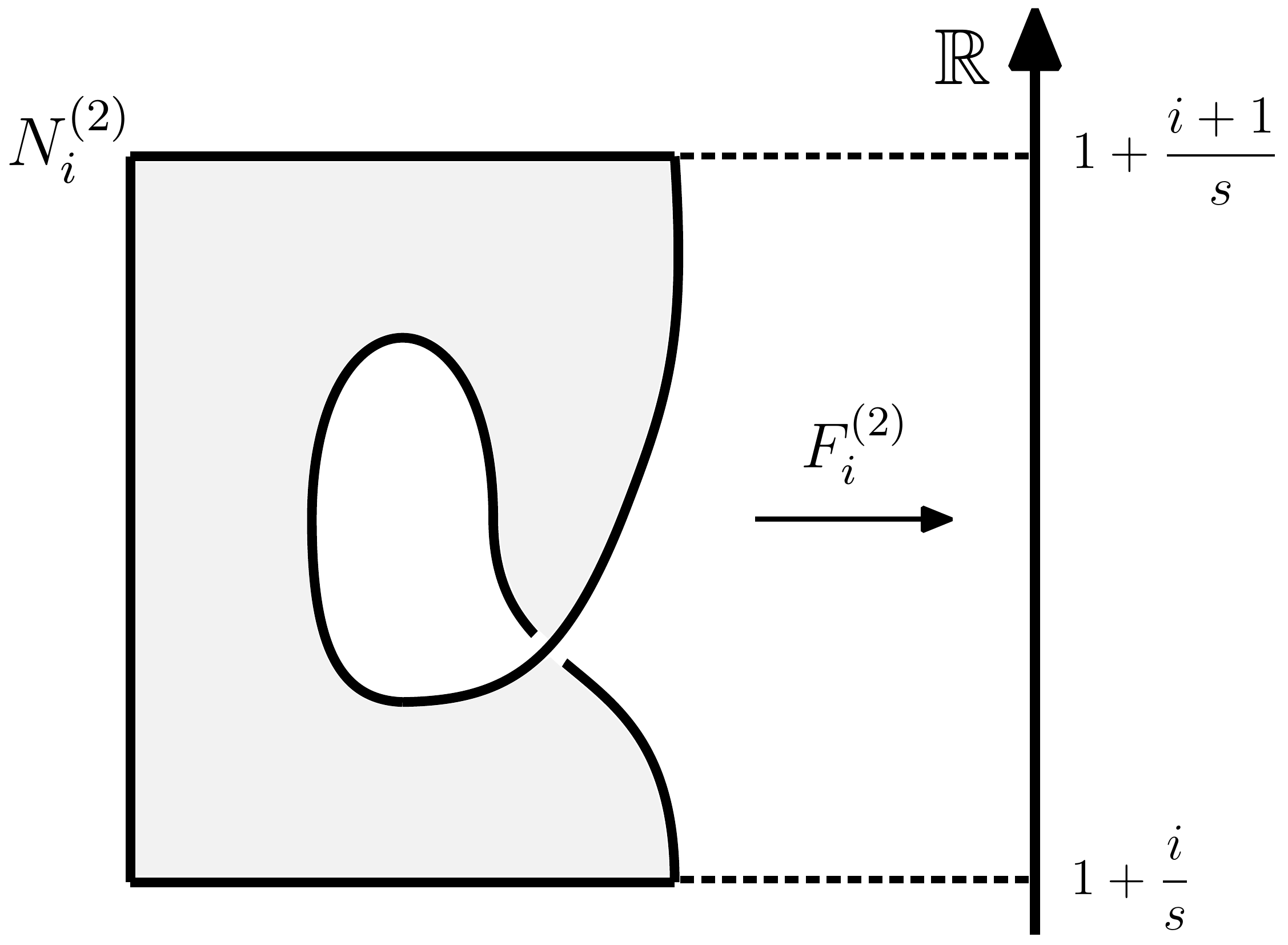}
    \caption{The map $F^{(2)}_i\colon N^{(2)}_i\to\mathbb{R}$.}
    \label{fig:3}
  \end{subfigure}
  \hspace{8mm}
  \begin{subfigure}[b]{0.39\textwidth}
    \centering
    \includegraphics[width=\textwidth]{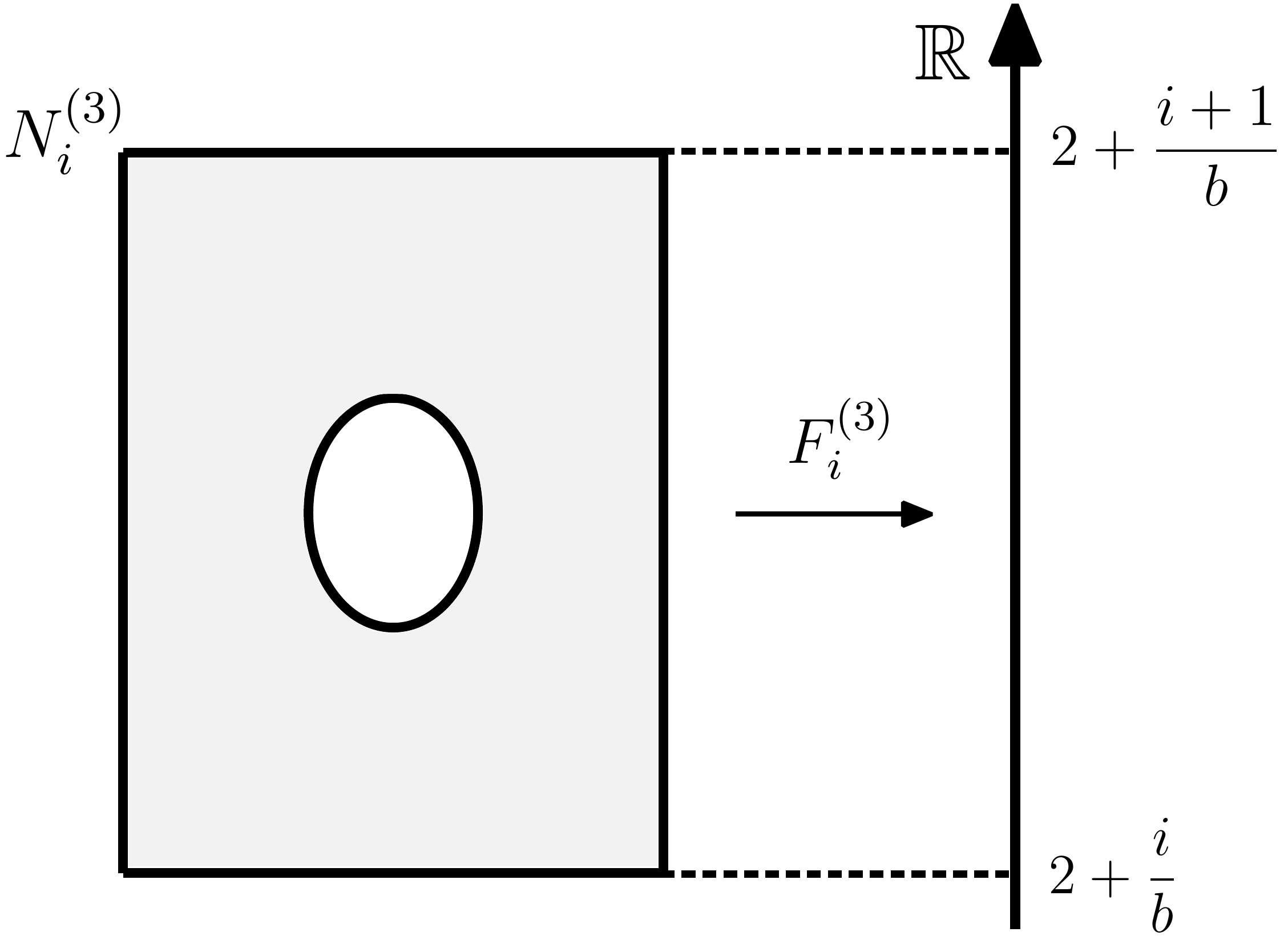}
    \caption{The map $F^{(3)}_i\colon N^{(3)}_i\to\mathbb{R}$.}
    \label{fig:4}
  \end{subfigure}
}

\vspace{3mm}

\makebox[\textwidth][c]{
  \begin{subfigure}[b]{0.38\textwidth}
    \centering
    \includegraphics[width=\textwidth]{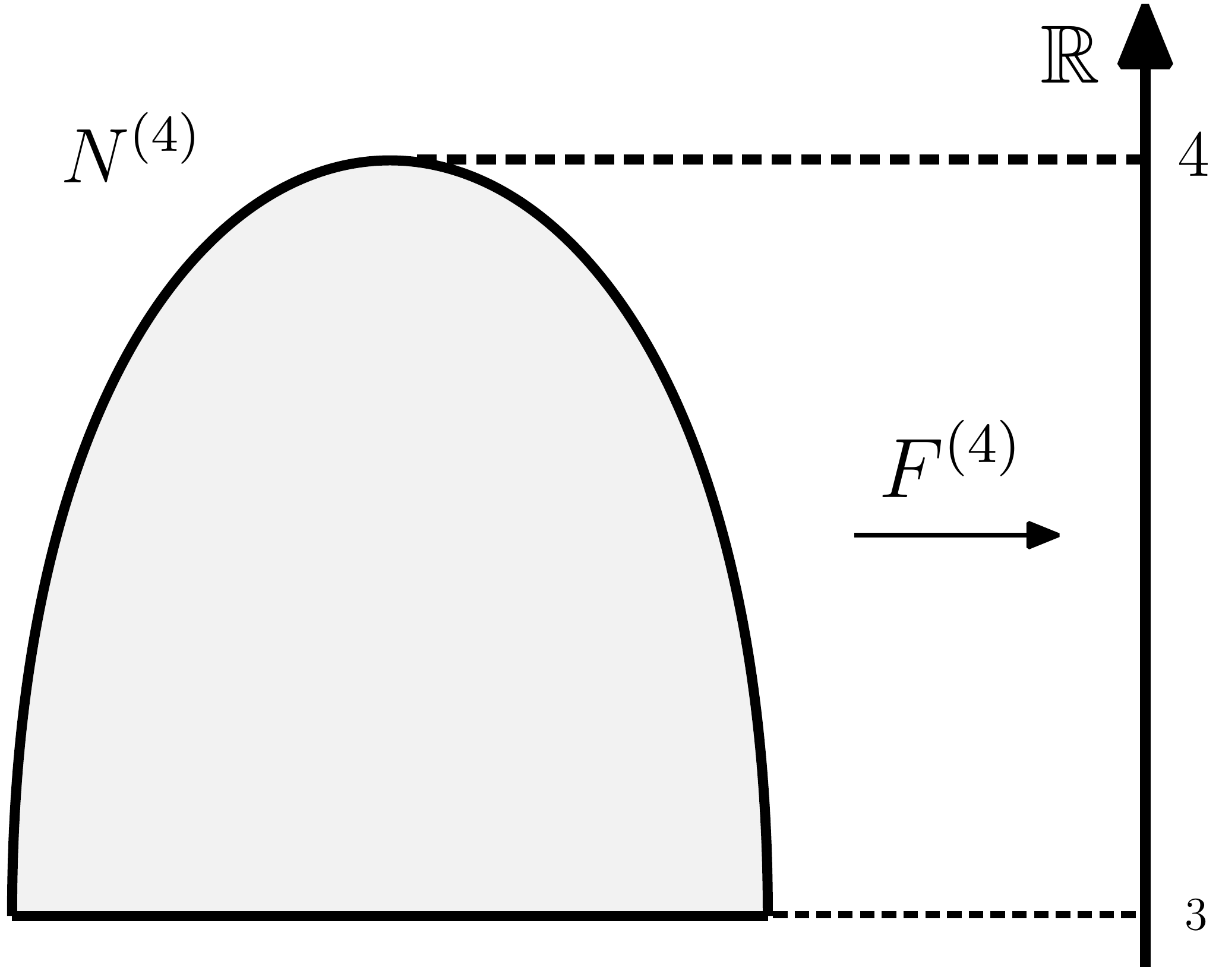}
    \caption{The map $F^{(4)}\colon N^{(4)}\to\mathbb{R}$.}
    \label{fig:5}
  \end{subfigure}
}

\caption{The parts to construct the submersion with definite folds $F\colon N\to\mathbb{R}$. 
}
\label{fig1}
\end{figure}
\qed

\vspace{10pt}

If we impose restrictions on the topological type of the regular fibers of $F$, then submersions with definite folds impose restrictions on the topological type of $N$. 
Before presenting the results, we introduce notation. 
Assume that $\mathbb{R}$ has its natural orientation, and choose the local coordinate of $\mathbb{R}$ in \Cref{prop2.4} so that it is consistent with the orientation.
Then, to each singular points of $F|_{\partial N}$, we can assign $\lambda\in\{0,n-1\}$ and $\sigma\in\{+,-\}$. 
Here, $\lambda$ is the number of negative coefficients among $x_i^2$ for $i=k,\dots,n-1$, and $\sigma$ indicates whether the coefficient of $x_n$ is positive or negative. 
We denote
$$
S^{\sigma}_{\lambda}(F)=\{p\in S(F|_{\partial N})\mid p\text{ corresponds to $(\lambda,\sigma)$}\}. 
$$
Note that $S(F|_{\partial N})=S^{+}_{0}(F)\sqcup S^{+}_{n-1}(F)\sqcup S^{-}_{0}(F)\sqcup S^{-}_{n-1}(F)$.

\begin{prop}
Let $N$ be a compact, connected, orientable surface with boundary. 
If $N$ admits a submersion with definite folds whose regular fibers are homeomorphic to $S^1$ or $D^1$, and if $F|_{S(F|_{\partial N})}$ is injective, then $N$ is diffeomorphic to $\Sigma_{0,\sharp S_1^+(F)+1}$. 
\end{prop}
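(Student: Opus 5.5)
The plan is to read $N$ level by level through the sublevel sets $N_c=F^{-1}((-\infty,c])$, using the local normal form of \Cref{prop2.4} with $n=2$, $k=1$, namely $F=\pm x_1^2\pm x_2$ with $x_2\ge 0$, and to track how $N_c$ changes as $c$ crosses each critical value of $F|_{\partial N}$. Since $F|_{S(F|_{\partial N})}$ is injective, each critical level contains exactly one singular point, so the four types can be analysed separately. I read the hypothesis on regular fibers as saying that every regular fiber is \emph{connected}, i.e.\ a single circle or a single arc; this is the key input.

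\emph{Step 1: local analysis of the four types.}
\begin{itemize}
\item A point of $S^+_0(F)$ ($F=x_1^2+x_2$) is a local minimum. Crossing it creates a new arc component of the fiber, and $N_c$ gains a half-disk ($0$-handle).
\item A point of $S^-_1(F)$ ($F=-x_1^2-x_2$) is a local maximum, where an arc component dies.
\item A point of $S^-_0(F)$ ($F=x_1^2-x_2$) cuts an arc of the fiber into two pieces. The sublevel set only changes by a collapse, so the diffeomorphism type of $N_c$ (after rounding corners) does not change.
\item A point of $S^+_1(F)$ ($F=-x_1^2+x_2$) joins two ends of the fiber. Here $N_c$ changes by attaching a band ($1$-handle) along two small arcs of $\partial N\cap N_c$ near the two endpoints being joined.
\end{itemize}

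\emph{Step 2: consequences of connected fibers.} Since $N$ is compact and $F$ has no interior critical points, the minimum of $F$ is attained at a point of $S^+_0(F)$. Any other point of $S^+_0(F)$ would lie at a level $c$ above the minimum, where the fiber just below is nonempty. Crossing it would then produce a disconnected regular fiber, a contradiction, so $\sharp S^+_0(F)=1$ and the first sublevel set is a disk.

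Similarly, at a point of $S^+_1(F)$ the two ends being joined cannot belong to two different arcs, since just below the level the fiber would be disconnected. Hence the fiber just below is a single arc $\gamma$, and the band joins the two endpoints of $\gamma$, turning it into a circle.

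\emph{Step 3: effect of each band (the main point).} The boundary of $N_c$, as a surface with corners, is $(\partial N\cap N_c)\cup F^{-1}(c)$. The arc $\gamma=F^{-1}(c)$ is contained in a single boundary circle of $N_c$, so both attaching regions of the band lie on the same boundary component. Since $N$ is orientable, the band is untwisted. Attaching an untwisted band to a planar surface along one boundary circle splits that circle into two and keeps the surface planar. Thus each point of $S^+_1(F)$ raises the number of boundary components by one and keeps genus $0$, while the points of $S^-_0(F)$ and $S^-_1(F)$ do not change the diffeomorphism type.

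Starting from the disk and applying this for each of the $\sharp S^+_1(F)$ bands gives $N\cong\Sigma_{0,\sharp S^+_1(F)+1}$. As a consistency check, $\chi(N)=\sharp S^+_0(F)-\sharp S^+_1(F)=1-\sharp S^+_1(F)$.

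I expect Step 3 to be the delicate part. It needs a careful justification that the band attaches along a single boundary circle: one has to handle the corners of $N_c$ and verify that passing a $\sigma=-$ point is really a collapse that preserves the diffeomorphism type. I would do this with the explicit models of \Cref{prop2.4} together with the standard regular-interval isotopy between critical levels.
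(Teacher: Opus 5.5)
Your argument is correct and is essentially the paper's proof. The paper also uses connectedness of the regular fibers to force the order $(0,+),(1,+),(0,-),(1,+),(0,-),\dots,(1,+),(0,-),(1,-)$ of the singular values, and then reads off $N$ from the handle decomposition of the m-function, where $(0,+)$ and $(1,+)$ points give $0$- and $1$-handles and $\sigma=-$ points change nothing (it cites the known results instead of deriving this from \Cref{prop2.4} as you do). Your Step 3, showing that each band is untwisted and attached along a single boundary circle of the current sublevel set, makes explicit the planarity that the paper's short proof leaves implicit.
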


\proof
Let $F$ be a submersion with definite folds satisfying the assumptions.
Then, the singular points of $F|_{\partial N}$ correspond, in increasing order of their singular values, to 
$$
(0,+),\ (1,+),\ (0,-),\ (1,+),\ (0,-),\ \dots,\ (1,+),\ (0,-),\ (1,-).
$$
By \cite[p.182]{Haj} and \cite[p.982]{BNR}, which are about the handle decomposition of $N$ corresponding to singular points of the restriction of m-functions to the boundary, those corresponding to $(0,+)$ and $(1,+)$ only affect the topological type of $N$. 
More precisely, they correspond to $0$ and $1$-handles, respectively. 
Thus, $N$ is diffeomorphic to $\Sigma_{0,\sharp S_1^+(F)+1}$.
\qed

\begin{rem}
Conversely, for any $b\geq 1$, $\Sigma_{0,b}$ admits a submersion with definite folds into $\mathbb{R}$ whose regular fibers are $S^1$ or $D^1$. 
Such an example is shown in \Cref{fig2}. 
\begin{figure}[t]
  \centering
  \includegraphics[width=0.35\textwidth]{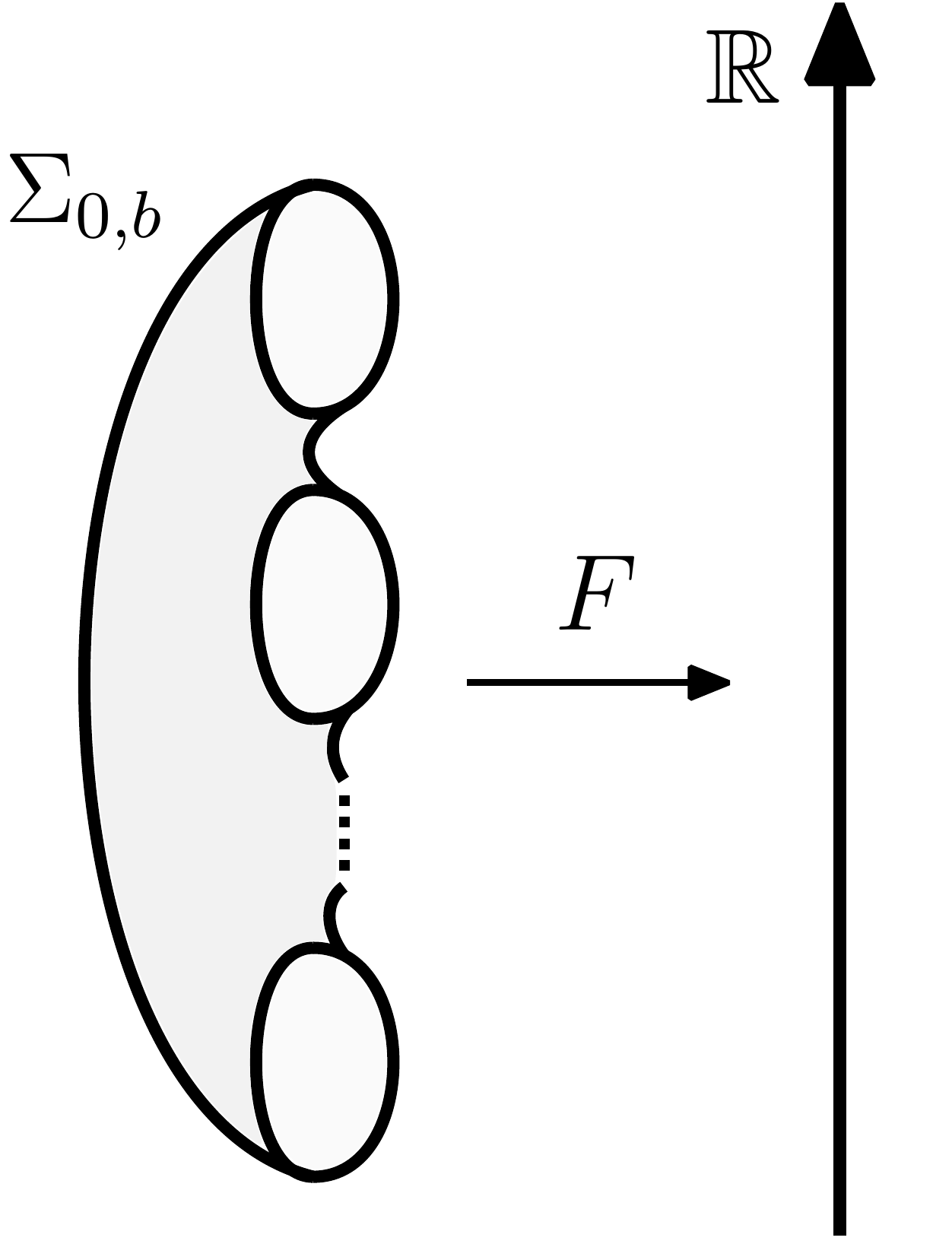}
  \caption{A submersion with definite folds $F\colon\Sigma_{0,b}\to\mathbb{R}$.}
  \label{fig2}
\end{figure}
\end{rem}

\subsubsection{The case $n\geq 3$}
We next consider the case where $n\geq 3$. 
In this case, we show that the diffeomorphism type of a manifold admitting a submersion with definite folds into $\mathbb{R}$ is strongly restricted.

\begin{thm}\label{thm3.4}
Let $N$ be a compact, connected, $n$-dimensional manifold with boundary, where $n\geq 3$.  
If $N$ admits a submersion with definite folds $F\colon N\to\mathbb{R}$, then $N$ is diffeomorphic to the manifold obtained by removing $(\sharp S_{n-1}^+(F)-1)$ $n$-dimensional open balls from $S^n$. 
\end{thm}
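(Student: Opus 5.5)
The plan is to read off a handle decomposition of $N$ from the sublevel sets $N_t=F^{-1}((-\infty,t])$, using the handle theory for m-functions in \cite{Haj, BNR, Bra} as in the proof of the preceding proposition. Since $F$ has no critical points in $\Int N$, and $F|_{\partial N}$ has only definite fold points, which for $k=1$ are nondegenerate critical points of index $0$ or $n-1$, the diffeomorphism type of $N_t$ changes only when $t$ passes a value $F(p)$ with $p\in S(F|_{\partial N})$. By \Cref{prop2.4} and \cite[p.182]{Haj}, \cite[p.982]{BNR}, one of the two signs $\sigma$ gives no change (a collar-type modification). The other sign gives the attachment of a $\lambda$-handle with $\lambda\in\{0,n-1\}$. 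So $N$ is built from $0$-handles and $(n-1)$-handles only.

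First, I would pin down the conventions carefully from the local model $y=\sum_{i=1}^{n-1}\pm x_i^2\pm x_n$ with $x_n\geq 0$. I want to check which pairs $(\lambda,\sigma)$ are the handle-attaching ones, and whether the relevant count is $\sharp S^+_{n-1}(F)$ itself or the corresponding count for $-F$. Replacing $F$ by $-F$ sends $(\lambda,\sigma)$ to $(n-1-\lambda,\sigma')$ for the appropriate $\sigma'$, so both readings can be played against each other. This bookkeeping is where the exact number $\sharp S_{n-1}^+(F)-1$ will come from, and I will fix it so that it agrees with the case $n=2$ treated before, where $\Sigma_{0,\sharp S_1^+(F)+1}$ appears.

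Second, I would use the local model to show that each $(n-1)$-handle is attached in a standard way. Its attaching sphere $S^{n-2}$ appears as the boundary of a small disk in $\partial N_t$, coming from the small level sphere near the fold point. Therefore the attaching sphere bounds an embedded $(n-1)$-disk in one boundary component of $N_t$, and no Schoenflies-type argument is needed. Third, I would run an induction. Each stage is a disjoint union of copies of $S^n$ with finitely many open balls removed. A $0$-handle adds a new component $D^n$. A standard $(n-1)$-handle attached along a trivial sphere has attaching region $S^{n-2}\times D^1$, which is connected because $n\geq 3$, so it lies in a single component $M$. The result is $M\natural(S^{n-1}\times[0,1])$, so one more ball is removed from that component.

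In particular, since $n\geq 3$, handles never join different components. Connectedness of $N$ therefore forces exactly one $0$-handle, and $N$ is $S^n$ with one ball removed for the $0$-handle and one more for each $(n-1)$-handle. Matching this count with the bookkeeping from the first step gives the stated number of removed balls. I expect the main obstacle to be precisely this bookkeeping: identifying the handle-attaching sign in \Cref{prop2.4} relative to the chosen orientation of $\mathbb{R}$, and applying the inductive step to $F$ or to $-F$ as needed, so that the count comes out as $\sharp S^+_{n-1}(F)-1$. The standardness of the attaching spheres should be routine from the normal form.
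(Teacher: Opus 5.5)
Your framework is the same as the paper's. The m-function handle theory (Lemma 2.18 of Borodzik--N\'emethi--Ranicki) builds $N$ from $0$- and $(n-1)$-handles. Since $S^{n-2}\times D^1$ is connected for $n\geq 3$, there is only one $0$-handle. However, your standardness step is justified incorrectly, and your count cannot come out as planned.

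\textbf{Standardness.} At a point $p$ of type $(n-1,+)$ the model is $F=-|x'|^2+x_n$ with $x_n\geq 0$. For $t=F(p)-\delta$ the sublevel set near $p$ is $\{|x'|^2\geq\delta,\ 0\leq x_n\leq |x'|^2-\delta\}$. The small disk $\{x_n=0,\ |x'|^2\leq\delta\}$ bounded by the small sphere is exactly the part of $\partial N$ not yet in $N_t$: it is the core of the handle. Near $p$ the attaching sphere is the waist of a local cylinder in $\partial N_t$ and bounds nothing there. So the normal form alone gives no standardness. This is why the paper invokes the smooth Schoenflies theorem, and needs a separate argument for $n=5$.

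Your conclusion is still true, but for a global reason. Let $S$ be the component of $\partial N$ containing $p$. Since $\dim S\geq 2$ and $F|_S$ is Morse with indices only $0$ and $n-1$, it has exactly one minimum and one maximum, namely $p$. Hence $S\cap N_t$ is $S$ minus a small open disk around $p$. This is a smooth $(n-1)$-disk in $\partial N_t$, by Morse theory with a single minimum, and it is bounded by the attaching sphere. The Palais--Cerf disk theorem isotopes it to a small standard disk. The framing is unique up to reflection, so the attachment gives $N_t\natural(S^{n-1}\times[0,1])$. With this repair your route is genuinely better than the paper's: it is uniform for all $n\geq 3$ and needs neither Schoenflies, Hatcher's theorem, nor Kervaire--Milnor.

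\textbf{The count.} With the paper's conventions, the handle-attaching points are those with $\sigma=+$, where $F$ increases into the interior. A point of type $(0,+)$ gives a $0$-handle and a point of type $(n-1,+)$ gives an $(n-1)$-handle. This matches the case $n=2$, where $\Sigma_{0,\sharp S^+_1(F)+1}$ appears. It also matches the paper's own proof, which attaches $\sharp S^+_{n-1}(F)$ $(n-1)$-handles to one $0$-handle. Your induction therefore yields $S^n$ minus $\sharp S^+_{n-1}(F)+1$ open balls.

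Passing to $-F$ does not change this. It sends $(\lambda,\sigma)\mapsto(n-1-\lambda,-\sigma)$. Each of the $\sharp S^+_{n-1}(F)+1$ boundary spheres carries exactly one minimum and one maximum, and exactly one point has type $(0,+)$. Hence $\sharp S^-_0(F)=\sharp S^+_{n-1}(F)$, and the count is the same for $F$ and $-F$. The ``$-1$'' in the statement is a misprint, and the paper's $n=5$ paragraph inherits it. The height function on $D^n$ shows this: there $\sharp S^+_{n-1}=0$, yet one ball is removed. You should state and prove the $+1$ version rather than try to match $-1$.
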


\proof
We first consider the case where $n\neq 5$. 
By \cite[Lemma~2.18]{BNR}, $N$ is obtained by attaching $\sharp S_{n-1}^+(F)$ $(n-1)$-handles to a $0$-handle. 
Since $n-1\neq 4$, it follows from the smooth Schoenflies theorem (see \cite{Bro} for $n=3$, \cite{Ale} for $n=4$, and \cite{Brow, Mil2} for $n\geq 6$) that the attaching maps are isotopic to the standard embeddings of $S^{n-2}$ into $S^{n-1}$. 
Furthermore, since $\pi_{n-2}({\rm{O}(1)})=0$, the framings are unique up to homotopy. 
Thus, the manifold $N$ is diffeomorphic to the desired manifold.

We next consider the case where $n=5$. 
Since $F|_{\partial N}$ is a definite fold map, it follows from \cite{Hat} that $\partial N$ is diffeomorphic to a disjoint union of standard spheres. 
We attach $(\sharp S_{n-1}^+(F)-2)$ copies of $D^5$ to $N$ along the identity maps of $S^4$, and obtain a manifold $\tilde{N}$ with boundary diffeomorphic to $S^4$.
By the construction of $N$, we have $H_i(\tilde{N};\mathbb{Z})=0$ for $i=1,\dots,5$ and $\pi_1(\tilde{N})=0$. 
By the Hurewicz theorem and the Whitehead theorem, $\tilde{N}$ is contractible; in particular, $\tilde{N}\cup_{\id_{S^4}}D^5$ is a homotopy sphere. 
Since the group of $5$-dimensional homotopy spheres is trivial by \cite{KM}, the closed manifold $\tilde{N}\cup_{\id_{S^4}}D^5$ is diffeomorphic to $S^5$. 
Therefore, $\tilde{N}$ is diffeomorphic to $S^5\setminus\Int B^5$, and $N$ is diffeomorphic to the desired manifold by the definition of $\tilde{N}$.
\qed

\begin{rem}
Conversely, there exists a submersion with definite folds from the manifold in \Cref{thm3.4} to $\mathbb{R}$. 
Such a map is constructed as a height function. 
\end{rem}

We then apply \Cref{thm3.4} to study non-singular extensions of definite fold maps into $\mathbb{R}$. 
The following corollary is an immediate consequence of \cite{Sei} and \Cref{thm3.4}.

\begin{cor}
Let $M$ be an $m$-dimensional closed manifold and let $f\colon M\to\mathbb{R}^k$ be a definite fold map, where $m\geq k$. 
Then, $f$ admits a non-singular extension if and only if $M$ is diffeomorphic to a finite disjoint union of standard spheres. 
\end{cor}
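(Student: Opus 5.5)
Note first that the statement is about $f\colon M\to\mathbb{R}^k$ for general $k$, yet it is described as an immediate consequence of \cite{Sei} and \Cref{thm3.4}, which concerns functions. So the plan is to combine a known characterization from \cite{Sei} with \Cref{thm3.4} to handle both implications.

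For the ``only if'' direction, assume $f$ admits a non-singular extension $F\colon N\to\mathbb{R}^k$ with $\partial N=M$. The cleanest route I see is the following. I expect \cite{Sei} to show that a map admitting a non-singular extension can be reduced to the function case. Concretely, one takes a generic linear projection $\pi\colon\mathbb{R}^k\to\mathbb{R}$. Then $\pi\circ F$ is still a submersion, since $\pi$ is a linear surjection. For generic $\pi$, the map $\pi\circ f$ is a Morse function on $M$. I would also need $\pi\circ f$ to be a definite fold (that is, special generic) function, so that \Cref{thm3.4} applies to $\pi\circ F$. The key point, which I would take from \cite{Sei}, is that the existence of a non-singular extension forces every critical point of $\pi\circ f$ to be of index $0$ or $m$. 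Indeed, Seigneur-type results state that a Morse function on the boundary extends to a submersion, with the appropriate side condition, only when the critical points have the local model of \Cref{prop2.4}, that is, the definite type.

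Once $\pi\circ F$ is a submersion with definite folds into $\mathbb{R}$, the argument splits by dimension.
\begin{itemize}
\item If $m+1\geq 3$, \Cref{thm3.4} shows that $N$ is $S^{m+1}$ with finitely many open balls removed. Hence $M=\partial N$ is a finite disjoint union of standard spheres.
\item If $m=1$, $M$ is a closed $1$-manifold, hence automatically a disjoint union of circles.
\item If $m=0$, $M$ is a finite set of points, that is, a union of copies of $S^0$. The convention here needs a brief check.
\end{itemize}

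For the ``if'' direction, let $M=\bigsqcup_{j=1}^{r} S^m$ and let $f$ be an arbitrary definite fold map. I expect \cite{Sei} to provide the criterion directly, for example that $f$ extends non-singularly provided $M$ bounds a suitable manifold such as a punctured sphere $S^{m+1}\setminus\bigsqcup_{j=1}^{r}\Int B^{m+1}$. The natural candidate is to take $N$ to be this punctured sphere, whose boundary is $M$. The construction of $F$ would glue, for each component, a non-singular extension of $f|_{S^m}$ over a collar, and then join the components by tubes along which the map is a submersion.

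The main obstacle is this converse construction. It must handle an arbitrary definite fold map on each sphere, whose singular value set in $\mathbb{R}^k$ can be complicated, and not just a standard one. I would first look for this statement verbatim in \cite{Sei}, since the text calls the corollary immediate. Failing that, I would build $F$ inside the Stein factorization (the Reeb space, a $k$-manifold $W_f$ with boundary) of $f$. The idea is to realize $N$ as a bundle over a thickening $W_f\times[0,1]$, adapting the boundary special generic constructions of \cite{Iwa3}.
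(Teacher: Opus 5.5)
\textbf{The gap is the reduction from $\mathbb{R}^k$ to $\mathbb{R}$.} For $k\ge 2$, a generic linear projection $\pi\circ f$ of a definite fold map is Morse. Its critical points are the points of $S(f)$ where $\pi$ restricted to $f(S(f))$ is critical, and their indices are in general intermediate. No result in \cite{Sei}, and nothing in \Cref{prop2.4}, says that non-singular extendability rules this out. \Cref{prop2.4} only gives the normal form at definite fold points of $F|_{\partial N}$; boundary critical points of $m$-functions can have any index.

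In fact, the statement you are trying to prove is false for $k\ge 2$. Take $F\colon S^1\times D^m\to\mathbb{R}^2$, $F(\theta,x)=(2+x_1)(\cos\theta,\sin\theta)$.
\begin{itemize}
\item $F$ is a submersion, since $\partial_\theta F$ and $\partial_{x_1}F$ are orthogonal and nonzero.
\item Its restriction to $M=S^1\times S^{m-1}$ is a definite fold map with singular set $S^1\times\{\pm e_1\}$.
\item So $f=F|_M$ has a non-singular extension, although $M$ is not a union of spheres.
\item Correspondingly, $\pi\circ f=(2+x_1)\cos\theta$ has critical points of index $1$ and $m-1$.
\end{itemize}
Hence no argument along your lines can succeed. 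The corollary is meant for $k=1$. It is introduced as an application of \Cref{thm3.4} to definite fold maps into $\mathbb{R}$, and \Cref{thm3.4} only concerns functions.

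\textbf{For $k=1$ your argument reduces to the paper's.} The projection step is vacuous, and the rest is the paper's ``only if'' argument. A non-singular extension of a definite fold function is itself a submersion with definite folds. So for $m\ge 2$, \Cref{thm3.4} gives $N\cong S^{m+1}$ minus finitely many open balls, and $M=\partial N$ is a union of standard spheres. The case $m=1$ is trivial.

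\textbf{The converse is not proved in your proposal.} The paper takes it from \cite{Sei}, and you also only defer to that reference. Your fallback sketch does not carry out the construction:
\begin{itemize}
\item Joining extensions over the individual spheres by tubes ($1$-handles) forms boundary connected sums, which merges the boundary spheres.
\item Joining collars by tubes $S^m\times I$ just reproduces the original extension problem on the tubes.
\end{itemize}
The Reeb-space construction for general $k$ becomes irrelevant once the statement is read with $k=1$.
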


\begin{rem}
In \cite[Corollary~1.5]{Iwa3}, we assume that the source manifolds of definite fold maps into $\mathbb{R}$ are connected. 
On the other hand, we do not need such an assumption in \Cref{thm3.4}. 
\end{rem}

\subsection{The case $k\geq 2$}
We next consider the case where $k\geq 2$. 
In this case, the general behavior of submersions with definite folds is more complicated. 
For this reason, we focus on the case where the restrictions to the boundary are round fold maps or image simple fold maps on closed manifolds.

\subsubsection{The case for round fold maps}
Here, we consider submersions with definite folds from $3$-dimensional manifolds to $\mathbb{R}^2$ whose restrictions to the boundary are round fold maps. 
We first recall the definition of a round fold map; see \cite{Kit} for more details.

\begin{defi}[\textbf{Round fold map}]\label{def3.5}
Let $M$ be an $m$-dimensional closed manifold and let $f\colon M\to\mathbb{R}^k$ be a map, where $m\geq k$. 
The map $f$ is called a \emph{round fold map} if the following conditions are satisfied: 
\begin{enumerate}
\item $S(f)$ is diffeomorphic to a finite disjoint union of copies of $S^{k-1}$. 
\item $f|_{S(f)}$ is an embedding. 
\item $f(S(f))$ agrees with $\sqcup_{i=1}^{\sharp S(f)} S_i$, where $S_i$ is a $(k-1)$-dimensional sphere in $\mathbb{R}^k$ of radius $i$ centered at the origin. 
\end{enumerate}
\end{defi}

Below, we fix an orientation of the $3$-dimensional manifold $N$ and suppose that $F|_{\partial N}$ is a round fold map. 
We first decompose $\mathbb{R}^2$ as
$$
\mathbb{R}^2=B_{1/2}\cup(\mathbb{R}^2\setminus\Int B_{1/2}), 
$$
where $B_{1/2}=\{(y_1,y_2)\in\mathbb{R}^2 \mid y_1^2+y_2^2\leq(1/2)^2\}$. 
We also denote by
$$
L=\{(y_1,0)\in\mathbb{R}^2 \mid y_1\geq 1/2 \}.  
$$
Corresponding to the decomposition of $\mathbb{R}^2$, we decompose $N$ as
\begin{equation}\label{equ1}
N=F^{-1}(\mathbb{R}^2\setminus\Int B_{1/2})\cup F^{-1}(B_{1/2}). 
\end{equation}
Then, $F^{-1}(\mathbb{R}^2\setminus\Int B_{1/2})$ is an $F^{-1}(L)$-bundle over $S^1$. 
Thus, this manifold can be regarded as a mapping torus, and there exists a diffeomorphism $\phi\colon F^{-1}(L)\to F^{-1}(L)$ such that  
$$
F^{-1}(\mathbb{R}^2\setminus\Int B_{1/2})\cong [-1,1]\times F^{-1}(L)/(-1,x)\sim (1, \phi(x)). 
$$
We call $\phi$ the \emph{associated monodromy} of $F$. 
It is known that the mapping torus of a periodic diffeomorphism of a compact, oriented surface admits a Seifert fibration; see \cite[p.6]{Hem}. 
This observation implies the following theorem. 
See \cite{Neu2} for the definition of graph manifolds.

\begin{thm}\label{thm3.9}
Let $N$ be a compact, connected, oriented $3$-dimensional manifold with boundary. 
If there exists a submersion with definite folds $F\colon N\to\mathbb{R}^2$ whose restriction to $\partial N$ is a round fold map, whose associated monodromy is periodic, such that $F^{-1}(0)$ has no $S^1$-components and $F^{-1}(L)$ is orientable, then $N$ is a graph manifold. 
\end{thm}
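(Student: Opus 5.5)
The plan is to use the decomposition \eqref{equ1} and show that each piece is a graph manifold. Then I glue them along a union of tori and annuli, or rather along the closed surface pieces of the common boundary.

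\textbf{The outer piece.} Set $E=F^{-1}(\mathbb{R}^2\setminus\Int B_{1/2})$. It is the mapping torus of the associated monodromy $\phi$ on the compact surface $F^{-1}(L)$.

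First, $F^{-1}(L)$ is orientable by hypothesis. Over $L$ the restricted map $F^{-1}(L)\to L\cong[1/2,\infty)$ is a submersion with definite folds. Since $F^{-1}(L)$ is orientable and $\phi$ is periodic, the cited fact from \cite[p.6]{Hem} gives a Seifert fibration of $E$; here $\phi$ preserves orientation because $N$ is oriented. Note that $F^{-1}(L)$ may have boundary, coming from the parts of $\partial N$ over $L$. The Seifert fibration is then one of a compact $3$-manifold with boundary, whose boundary components are tori or Klein bottles and whose fibers are circles. Because the mapping torus is oriented, these boundary components are tori.

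\textbf{The inner piece.} Set $V=F^{-1}(B_{1/2})$. Since all singular values of $F|_{\partial N}$ lie on circles of radius $\ge 1$, the restriction of $F$ over $B_{1/2}$ is a proper submersion without boundary singularities. In fact $\partial N\cap V$ maps submersively onto $B_{1/2}$ as well, since $F|_{\partial N}$ has no singular values there. By Ehresmann's theorem for manifolds with boundary, $V\cong B_{1/2}\times F^{-1}(0)$.

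The hypothesis that $F^{-1}(0)$ has no $S^1$-components means that each component of the compact $1$-manifold $F^{-1}(0)$ is an arc. Hence $V$ is a disjoint union of $D^2\times[0,1]$, i.e.\ of $3$-balls. In particular, $V$ meets $E$ along $\partial B_{1/2}\times F^{-1}(0)$, which is a disjoint union of annuli $S^1\times[0,1]$, and these annuli lie in $\partial E$.

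\textbf{Gluing.} Attaching a ball $D^2\times[0,1]$ to $E$ along an annulus $S^1\times[0,1]\subset\partial E$ is a $2$-handle attachment (Dehn filling along a boundary torus is the closed analogue). The key claim is that the attaching circle $S^1\times\{t\}$ is a fiber of $F^{-1}(\partial B_{1/2})\to\partial B_{1/2}$ transverse to $F^{-1}(L)$, i.e.\ it is the "section" direction of the mapping torus rather than a Seifert fiber.

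I would arrange the Seifert structure so that the circles $\partial B_{1/2}\times\{pt\}$ in the annulus are isotopic in $\partial E$ to (multiples of) regular fibers. This is possible because for periodic $\phi$ of order $m$, the Seifert fibers are the orbits of the $S^1$-action induced by the flow, and over a point $x\in F^{-1}(L)\cap F^{-1}(\partial B_{1/2})$ the fiber winds $m$ times around the $S^1$-direction.

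In general, I expect the attaching curve not to be a fiber. So the cleanest route is as follows. Consider the torus or annulus boundary component $T$ of $E$ containing the annulus, together with the part of $\partial N$ over the annulus. Gluing a solid cylinder along an annulus in a boundary torus of a Seifert fibered space yields a manifold that is a union of Seifert pieces glued along tori, i.e.\ a graph manifold. Indeed, the result equals $E$ with a Dehn filling performed on $T$ (after pushing the annulus complement into the boundary), and Dehn filling a Seifert fibered space along a boundary torus gives either a Seifert manifold or a connected sum of such. Connected sums of graph manifolds are graph manifolds in Neumann's sense \cite{Neu2}, since a plumbing calculus includes them.

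\textbf{Main obstacle.} The main obstacle is the gluing step: making precise how the annuli $\partial B_{1/2}\times F^{-1}(0)$ sit in the boundary tori of $E$ relative to the Seifert fibers, and handling the exceptional case where the filling slope is a fiber slope. That case produces connected sums with $S^1\times S^2$ or lens-space summands, which I would check are still allowed in the class of graph manifolds of \cite{Neu2}.
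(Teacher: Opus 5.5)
Your outline matches the paper's in its first two steps: the decomposition \eqref{equ1}, and the Seifert fibration of $E$ obtained from Hempel's fact. Your identification of the components of $V=F^{-1}(B_{1/2})$ as $3$-balls attached along annuli is in fact more accurate than the paper's wording ``solid tori''. You diverge in the last step, and the ``main obstacle'' you flag is not really one. The paper's implicit argument is that $N$ is $E$ with solid tori glued onto its boundary tori. A solid torus is itself Seifert fibered, and a graph manifold is a union of Seifert fibered pieces glued along tori by \emph{arbitrary} diffeomorphisms. So no analysis of the filling slope is needed; the slope only affects the resulting plumbing graph, through the matrices $A_i=JH^{(i)}_{l_i}J\cdots JH^{(i)}_{1}J$.

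Your detour is valid but unnecessary. It combines ``Dehn filling a Seifert fibered space gives a Seifert fibered space or a connected sum'' with ``connected sums of graph manifolds are graph manifolds''. It could in principle identify $N$ more explicitly, but it forces a case analysis. That analysis gets clumsy when several boundary tori are filled in succession, since after a fiber-slope filling the manifold is reducible and you must track which summand carries the remaining tori. Moreover, the fiber-slope case is not exceptional, contrary to your guess. For $N=D^3$ with the standard projection to $\mathbb{R}^2$, $E\cong S^1\times D^2$ carries the product fibration (the monodromy is the identity). The $2$-handle is then attached along $S^1\times\{pt\}$, which is a fiber.

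What you should make precise instead is the relation between your balls and the paper's solid tori; your phrase ``after pushing the annulus complement into the boundary'' leaves this vague. Suppose a boundary torus $T$ of $E$ contains $j\ge 1$ of the annuli $\partial B_{1/2}\times a$, with $a$ an arc of $F^{-1}(0)$. Then the $j$ complementary annuli of $T$ lie in $\partial N$. Attaching the $j$ corresponding balls yields $E\cup_T (S^1\times D^2)$ with $j$ open $3$-balls removed. These are exactly the balls that would cap the $j$ sphere components of $\partial N$ formed over $T$. Hence $N$ is a graph manifold, namely $E$ with solid tori glued on, with finitely many open balls removed. The conclusion ``graph manifold'' (yours and the paper's) must therefore be read as allowing such sphere boundary components. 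This reading is forced anyway, since $D^3$ satisfies all the hypotheses of the theorem.
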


We describe the plumbing graph of $N$ induced by such a submersion with definite folds whose associated monodromy is an identity. 
See \cite{Neu1} for the definition and properties of plumbing graphs. 
Now, $N$ is decomposed as (3.1). 
Suppose that the orientation of $F^{-1}(\mathbb{R}^2\setminus\Int B_{1/2})$ is induced from $N$, and $S^1\times F^{-1}(L)$ is oriented by appropriate orientations of $F^{-1}(L)$ and $S^1$ so that $F^{-1}(\mathbb{R}^2\setminus\Int B_{1/2})$ is orientation preserving diffeomorphic to $S^1\times F^{-1}(L)$. 
The natural orientation of $F^{-1}(B_{1/2})$ is also induced from $N$. 
We denote by $T_1,\dots,T_t$ the connected components of $F^{-1}(B_{1/2})$, which are solid tori. 
Then, $N$ is obtained by attaching $T_1,\dots,T_t$ to $S^1\times F^{-1}(L)$ along orientation reversing diffeomorphisms. 
For each boundary component of $S^1\times F^{-1}(L)$ to which each $T_i$ is attached, we take a basis of $H_1(S^1\times F^{-1}(L);\mathbb{Z})$ naturally induced from the product structure. 
We also take a basis of $H_1(\partial T_1;\mathbb{Z}),\dots,H_1(\partial T_t;\mathbb{Z})$ in the similar way. 
By ordering these bases so that the base direction is first and the fiber direction is second, each attaching is represented by a matrix $A_i\in{\rm{GL}}(2,\mathbb{Z})$ for $i=1,\dots,t$, whose determinant is $-1$.  
Then, it follows from the argument in \cite[p.10]{Neu1} that the plumbing graph of $N$ is given as in \Cref{fig3}.  
The numbers $g\geq 0$ and $b\geq 1$ denote the genus of $F^{-1}(L)$ and the number of its boundary components, respectively.
Here, each $A_i$ admits a decomposition
$$
A_i=JH^{(i)}_{l_i}J\cdots JH^{(i)}_{1}J,
$$
where $l_i\geq 0$, 
$$
J=
\begin{pmatrix}
0 & 1\\
1 & 0
\end{pmatrix},
$$
and
$$
H_j^{(i)}=
\begin{pmatrix}
-1 & 0\\
-e_{ij} & 1
\end{pmatrix},
$$
where each $e_{ij}$ is an integer.  
\begin{figure}[t]
  \centering
  \includegraphics[width=0.42\textwidth]{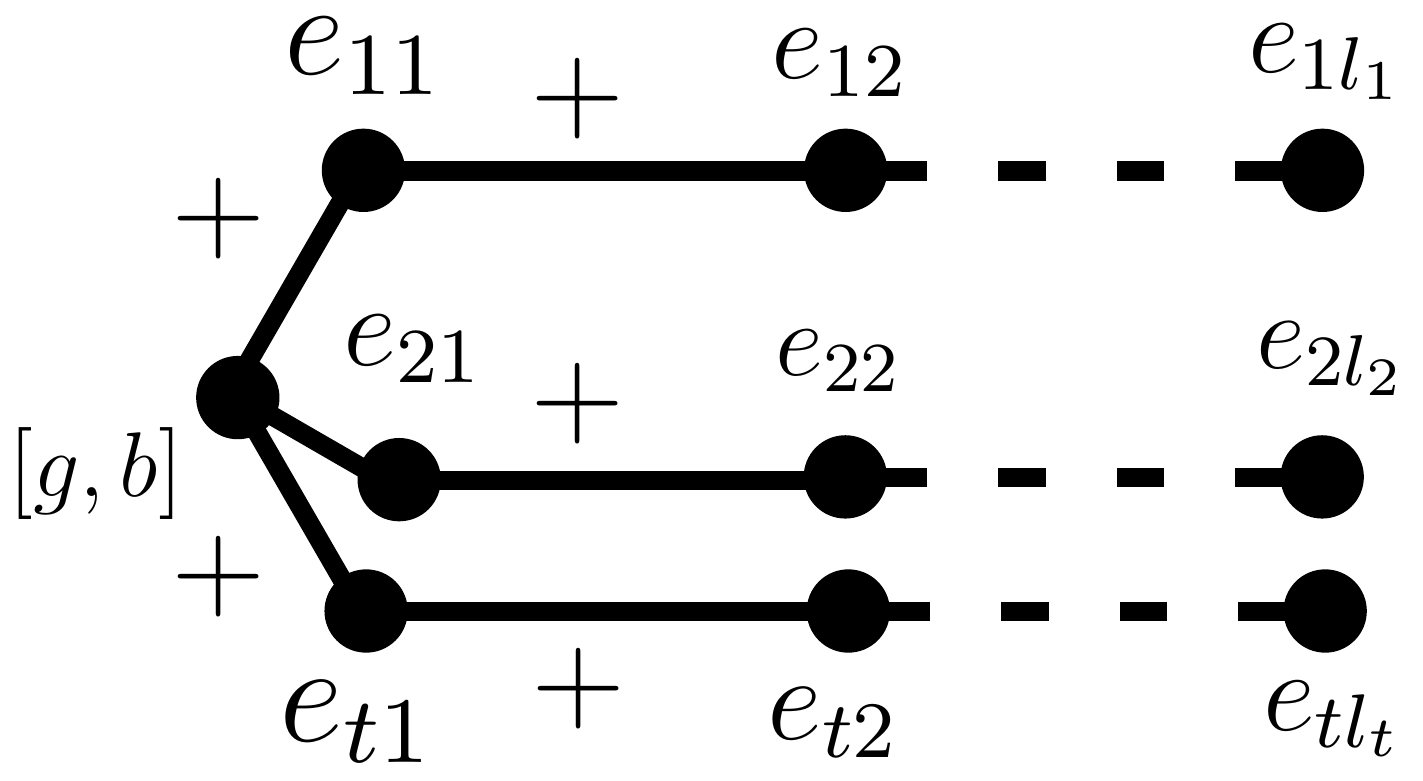}
  \caption{The plumbing graph associated with the map $F$.}
  \label{fig3}
\end{figure}

\begin{rem}
In \Cref{thm3.9}, we assume that $F^{-1}(0)$ has only $D^1$-components. 
If this assumption is removed, then the $S^1$-components of $F^{-1}(0)$ give rise to $3$-dimensional $2$-handles attached to the manifold considered above. 
The number of these handles is equal to the number of $S^1$-components of $F^{-1}(0)$. 
\end{rem}

\subsubsection{The case for image simple fold maps}
We consider submersions with definite folds whose restrictions to the boundary are image simple fold maps. 
In this case, we obtain a formula for computing the Euler characteristics of the source manifolds. 
The notation and results below generalize those of \cite{Iwa2}. 
We first recall the definition of image simple fold maps; see \cite{KS, SS} for their properties.

\begin{defi}[\textbf{Image simple fold map}]
Let $M$ be a closed $m$-dimensional manifold and let $f\colon M\to\mathbb{R}^k$ be a fold map with $m\geq k$. 
Then, $f$ is called an \emph{image simple fold map} if $f|_{S(f)}$ is an embedding. 
\end{defi}

\begin{rem}
A round fold map is an image simple fold map with additional properties. 
\end{rem}

In what follows, we assume that the restrictions of submersions with definite folds to the boundary are image simple fold maps. 
To give the formula, we first define the target graph $G_F$ as follows. 
We begin by defining an underlying graph $\tilde{G}_F$.
The vertices of $\tilde{G}_F$ correspond to the connected components of $\mathbb{R}^k\setminus F(S(F|_{\partial N}))$. 
Since $F|_{\partial N}$ is an image simple fold map, its restriction to the singular point set is an embedding.
Hence, its image is a hypersurface of $\mathbb{R}^k$, that is, a codimension one submanifold.  
We define an edge between two vertices corresponding to adjacent components of $\mathbb{R}^k \setminus F(S(F|_{\partial N}))$ along the singular value set.  
We denote by $\tilde{G}_F$ the graph obtained in this way, consisting only of vertices and edges.
Then, by the definition of $\tilde{G}_F$, it has the following property. 
It follows from Jordan--Brouwer separation theorem; see \cite[p.89]{GP} for example.

\begin{lem}
The graph $\tilde{G}_F$ is a tree, that is, it is connected and has no cycles.
\end{lem}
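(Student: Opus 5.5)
We prove connectivity and acyclicity separately, using the fact that $\Sigma:=F(S(F|_{\partial N}))$ is a compact hypersurface in $\mathbb{R}^k$.

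First we fix the structure of $\Sigma$. Since $\partial N$ is closed and $S(F|_{\partial N})$ is a closed submanifold of dimension $k-1$, it is compact. Because $F|_{\partial N}$ is image simple, $\Sigma$ is an embedded compact $(k-1)$-submanifold without boundary. Its components $\Sigma_1,\dots,\Sigma_r$ are pairwise disjoint closed connected hypersurfaces. By the Jordan--Brouwer separation theorem (\cite[p.89]{GP}), each $\Sigma_j$ separates $\mathbb{R}^k$ into exactly two components, a bounded inside and an unbounded outside, and $\Sigma_j$ is the common boundary of both. We regard each edge of $\tilde{G}_F$ as labelled by a component $\Sigma_j$. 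Since $\Sigma_j$ has a collar $\Sigma_j\times(-\varepsilon,\varepsilon)$ meeting no other component, exactly two regions of $\mathbb{R}^k\setminus\Sigma$ are adjacent along $\Sigma_j$. So the edges correspond bijectively to $\Sigma_1,\dots,\Sigma_r$.

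\textbf{Connectedness.} Take two regions $U,V$ and points $u\in U$, $v\in V$. Join them by a smooth path $\gamma$ in $\mathbb{R}^k$ transverse to $\Sigma$. Then $\gamma$ meets $\Sigma$ in finitely many points. Between consecutive crossings $\gamma$ stays in a single region. Each crossing through $\Sigma_j$ passes between the two regions adjacent along $\Sigma_j$, which is an edge of $\tilde{G}_F$. Reading off the successive regions therefore gives an edge path from $U$ to $V$.

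\textbf{No cycles.} The cleanest route is to show that removing any edge disconnects the graph; a connected graph in which every edge is a bridge is a tree. Fix the edge labelled $\Sigma_j$, and let $I_j$ and $O_j$ be the inside and outside of $\Sigma_j$. Every other component $\Sigma_i$ is connected and disjoint from $\Sigma_j$, so it lies entirely in $I_j$ or entirely in $O_j$. Hence every region other than the two adjacent to $\Sigma_j$ lies in $I_j$ or in $O_j$, and the two adjacent regions lie one in each. Every edge other than $\Sigma_j$ joins two regions that are adjacent along some $\Sigma_i\subset I_j$ (resp.\ $O_j$); near $\Sigma_i$ both sides lie in $I_j$ (resp.\ $O_j$), so both endpoints lie on the same side. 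Thus deleting the edge $\Sigma_j$ splits the vertices into the nonempty classes "inside $\Sigma_j$" and "outside $\Sigma_j$" with no remaining edge between them. So every edge is a bridge, and $\tilde{G}_F$ is a tree.

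Alternatively, one can count. There are $r$ edges and, by induction on $r$, $r+1$ regions: adding a closed hypersurface that lies inside a single region $W$ splits $W$ into exactly two pieces, by Jordan--Brouwer applied to that hypersurface. A connected graph with $V=E+1$ is a tree.

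The main obstacle is the bookkeeping in the bridge argument. The essential points are that each $\Sigma_i$, being connected, lies on one side of each $\Sigma_j$, and that each $\Sigma_j$ borders exactly two regions; both follow directly from the separation theorem together with tubular neighbourhoods.
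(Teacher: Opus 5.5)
Your proof is correct and follows the same route as the paper, whose only justification is the remark that the lemma follows from the Jordan--Brouwer separation theorem. Your transverse-path argument for connectedness and your argument that each edge (each component of $F(S(F|_{\partial N}))$) is a bridge supply exactly the details that remark leaves implicit.
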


There exists a unique unbounded connected component of $\mathbb{R}^k\setminus F(S(F|_{\partial N}))$. 
Denote by $v_0$ the vertex corresponding to this region.  
By taking $v_0$ as the root, we may define the depth of each vertex as the number of edges in the path from $v_0$ to the given vertex.  
We direct each edge so that the depth increases along it.  
Moreover, with respect to the direction in which the depth increases, we take a line $L$ transverse to the corresponding component of $F(S(F|_{\partial N}))$. 
Then, the composition $\pi_L\circ F$, where $\pi_L\colon\mathbb{R}^k\to L$ is the projection, is again a submersion with definite folds. 
Since the line $L$ is oriented in the direction in which the depth increases, each edge is labeled by one of four types
$$
(0,+),\ (0,-),\ (n-k-1,+),\ (n-k-1,-), 
$$
as in Section~3.1. 
Thus, by assigning to each edge a direction and a label $(\lambda,\sigma)\in\{0,n-k-1\}\times\{+,-\}$, and to each vertex its depth, we obtain a graph $G_F$.  
We call this graph the \emph{target graph} of $F$.  
Here, for each $(\lambda,\sigma)$, let $E(\lambda,\sigma)$ denote the set of all edges of $G_F$ labeled by $(\lambda,\sigma)$.  
Note that, denoting by $E$ the set of the edges of $G_F$, we have 
\begin{equation}
E=E(0,+)\sqcup E(0,-)\sqcup E(n-k-1,+)\sqcup E(n-k-1,-).
\end{equation}
Then, we obtain the following theorem.

\begin{thm}\label{thm3.14}
Let $N$ be a compact, connected, $n$-dimensional manifold with boundary, and let $F\colon N\to\mathbb{R}^k$ be a submersion with definite folds whose restriction to the boundary is an image simple fold map, where $n>k$.
Then, we have 
$$
\chi(N)=\sum_{e\in E(0,+)} \chi(S_e) + \sum_{e\in E(n-k-1,+)} (-1)^{n-k}\chi(S_e)+ \sum_{e\in E} \bigl(\chi(R_e)-\chi(S_e)\bigr)\chi(F^{-1}(r_e)).
$$
Moreover, for each $e\in E$, we have 
$$
\chi(F^{-1}(r_e))=d_{(0,+)}(r_e)-d_{(n-k-1,-)}(r_e)+(-1)^{n-k-1}\bigl(d_{(0,-)}(r_e)-d_{(n-k-1,+)}(r_e)\bigr). 
$$
Here, $R_e$ is the component of $\mathbb{R}^k\setminus F(S(F|_{\partial N}))$ corresponding to the head of $e$, $S_e$ is the component of $F(S(F|_{\partial N}))$ corresponding to $e$, $r_e$ is a point of $R_e$, and $d_{(\lambda,\sigma)}(r)$ is the number of edges with $(\lambda,\sigma)$ on the path from $v_0$ to the vertex corresponding to the component containing $r$. 
\end{thm}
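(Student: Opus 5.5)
The plan is to compute $\chi(N)$ by cutting the target along the singular value set and using the additivity of the Euler characteristic over $F$-preimages. I will first decompose $\mathbb{R}^k$ into the closures of the components $R_v$ of $\mathbb{R}^k\setminus F(S(F|_{\partial N}))$ together with thin collars of each component $S_e$ of the singular value set. Over the interior of each region $R_v$, $F$ is a proper submersion on $N$ and its restriction to $\partial N$ is also a submersion. By Ehresmann's fibration theorem for manifolds with boundary, $F^{-1}(R_v)\to R_v$ is then a locally trivial bundle with fiber $F^{-1}(r_v)$. Since $F(N)$ is compact and contained in the closure of the bounded regions together with the images, and $F^{-1}(R_{v_0})=\emptyset$, only the heads of edges contribute. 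Every non-root vertex is the head of exactly one edge $e$ (because $\tilde G_F$ is a tree), so the vertices other than $v_0$ are indexed by $E$.

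Next I will apply the inclusion–exclusion formula $\chi(A\cup B)=\chi(A)+\chi(B)-\chi(A\cap B)$, using compact codimension-zero pieces that deformation retract appropriately.
\begin{itemize}
\item For a bundle piece over a compactly shrunk $R_e$, $\chi=\chi(R_e)\chi(F^{-1}(r_e))$. Here $R_e$ is an open region, so $\chi$ is taken in the sense of its compact deformation retract.
\item Over a collar neighborhood $S_e\times[-\varepsilon,\varepsilon]$ of $S_e$, the preimage is, by \Cref{prop2.4}, a bundle over $S_e$. Its fiber is the preimage of the segment under the one-dimensional model $\pi_L\circ F$.
\item The intersections are bundles over $S_e\times\{\pm\varepsilon\}$, with fibers $F^{-1}$ of the tail-side and head-side points.
\end{itemize}
The local model $\sum_{i\ge k}^{n-1}\pm x_i^2\pm x_n$ shows how the fiber changes across $S_e$: it attaches or removes a $0$-handle or an $(n-k-1)$-handle, giving a $D^{n-k}$ relative to the fiber, or merely collapses a boundary region. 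By Section~3.1 and \cite{Haj,BNR}, only the $\sigma=+$ labels change the topology, namely $(0,+)$ adds a component and $(n-k-1,+)$ adds an $(n-k-1)$-handle. Thus the collar preimage, relative to the tail fiber bundle, contributes $\chi(S_e)$ for $E(0,+)$, $(-1)^{n-k-1}$-type terms for $E(n-k-1,+)$, and $0$ for the minus labels. The sign must be checked against the stated $(-1)^{n-k}\chi(S_e)$.

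Assembling the pieces, each head region contributes $\chi(R_e)\chi(F^{-1}(r_e))$. Each gluing along $S_e$ subtracts the head-side boundary term $\chi(S_e)\chi(F^{-1}(r_e))$, and the collar/tail contributions telescope into the local handle terms. This yields the first formula. The main obstacle is this telescoping bookkeeping: I must verify that the tail-side boundary term cancels against the collar term exactly, leaving $\chi(S_e)$ times the handle contribution. I would first try to prove this via a relative Euler characteristic of the pair (collar preimage, head-side preimage), computed fiberwise over $S_e$ by multiplicativity for pairs of bundles.

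The fiber formula then follows by induction on depth along the path from $v_0$ to $R_e$. The fiber $F^{-1}(r)$ is a compact $(n-k)$-manifold with boundary. Crossing an edge in the direction of increasing depth changes $\chi$ of the fiber by $+1$ for $(0,+)$ (a new disk appears), by $(-1)^{n-k-1}$ for $(n-k-1,+)$-handle attachment, and for the minus labels by the corresponding effect of the boundary fold. Determining that effect requires reading $\chi$ of the fiber of $\sum x_i^2 - x_n\le c$ as $c$ crosses $0$; for $(0,-)$ this gives the term $(-1)^{n-k-1}$. I expect the signs for $(n-k-1,\pm)$ to come out as stated by the same half-space model computation, with the base case $\chi(\emptyset)=0$ at $v_0$.
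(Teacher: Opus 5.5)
Your architecture is the paper's: Ehresmann over the regions, inclusion--exclusion over the tree with $F^{-1}(R_{v_0})=\emptyset$, multiplicativity of $\chi$ for the collar bundles over $S_e$, and the one-dimensional m-function model for the collar fiber. The paper just runs the inclusion--exclusion inductively, attaching $F^{-1}(N(S_e))$ and then $F^{-1}(R_e)$ to the piece $Q$ already built up to the tail of $e$.

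\textbf{The gap.} It sits exactly where the sign in the statement comes from. You read the label $n-k-1$ as a handle index. You then conclude that an edge of type $(n-k-1,+)$ attaches an $(n-k-1)$-handle, contributes $(-1)^{n-k-1}\chi(S_e)$, and changes $\chi$ of the fiber by $(-1)^{n-k-1}$. Both values have the wrong sign, and deferring this (``the sign must be checked'') leaves the theorem unproved.

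In the normal form of \Cref{prop2.4} there are $n-k$ squared coordinates $x_k,\dots,x_{n-1}$, so the all-negative fold has $n-k$ negative squares. Take $y=-|x'|^2+x_n$ with $x'\in\mathbb{R}^{n-k}$ and $x_n\ge 0$. The level set $y=c$ is locally $\{|x'|\ge\sqrt{-c}\}$ for $c<0$ and all of $\mathbb{R}^{n-k}$ for $c>0$. So a boundary sphere $S^{n-k-1}$ of the fiber is capped by $D^{n-k}$. This is an $(n-k)$-handle, and it changes $\chi$ by $1-\chi(S^{n-k-1})=(-1)^{n-k}$.

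Likewise, the sublevel sets $\{y\le c\}$ of the collar model pass from $\simeq S^{n-k-1}$ to contractible. Hence the collar fiber $A_e$ is the tail fiber with an $(n-k+1)$-dimensional $(n-k)$-handle attached, and $\chi(A_e)$ exceeds $\chi$ of the tail fiber by $(-1)^{n-k}$. This is what gives $(-1)^{n-k}\chi(S_e)$ in the first formula and the coefficient $-(-1)^{n-k-1}$ of $d_{(n-k-1,+)}$ in the second. With your index, both formulas would come out with the wrong sign on that term.

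\textbf{The bookkeeping.} What you call the main obstacle is routine once the collar is paired with the correct side. Write $t(e)$ for the tail of $e$ and $r_{t(e)}$ for a point of its region, so that $F^{-1}(r_{t(e)})=\emptyset$ if $t(e)=v_0$. Your cover by shrunk regions and disjoint collars $S_e\times[-\varepsilon,\varepsilon]$ then gives
\[
\chi(N)=\sum_{e\in E}\chi(R_e)\chi(F^{-1}(r_e))+\sum_{e\in E}\chi(S_e)\bigl(\chi(A_e)-\chi(F^{-1}(r_{t(e)}))-\chi(F^{-1}(r_e))\bigr).
\]
The handle term is $\chi(A_e)-\chi(F^{-1}(r_{t(e)}))$, i.e.\ the collar relative to the \emph{tail}-side preimage, as you first say. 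It is not relative to the head-side preimage, as in the pair you later propose. That difference is:
\begin{itemize}
\item $1$ for $(0,+)$;
\item $(-1)^{n-k}$ for $(n-k-1,+)$;
\item $0$ for $\sigma=-$, since then $A_e$ deformation retracts onto the tail side.
\end{itemize}
The first formula follows edge by edge, with no telescoping across the tree. The second follows by your induction along the path from $v_0$, once the $(n-k-1,+)$ increment is corrected to $(-1)^{n-k}$.
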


\proof
Using $G_F$, we decompose $N$ as
$$
N=\bigcup_{e\in E}\bigl(F^{-1}(R_e)\cup F^{-1}(N(S_e))\bigr), 
$$
where $N(S_e)\subset\mathbb{R}^k$ is a tubular neighborhood of $S_e$. 
Since $G_F$ is a tree, to compute $\chi(N)$, it is enough to compute the contribution obtained by adjoining $F^{-1}(N(S_e))$ and $F^{-1}(R_e)$ along each edge $e$. 
Note that $F^{-1}(R_0)=\emptyset$, where $R_0$ is the unbounded region. 
Let $Q$ denote the manifold obtained after completing the attachment up to the vertex corresponding to the tail of $e$.
For any $e\in E$, we have 
\begin{align*}
\chi\bigl(Q\cup F^{-1}(N(S_e))\cup F^{-1}(R_e)\bigr)=
& \chi\bigl(Q\cup F^{-1}(N(S_e))\bigr)+\chi(F^{-1}(R_e))\\
& -\chi\big((Q\cup F^{-1}(N(S_e)))\cap F^{-1}(R_e)\big). 
\end{align*}
Here, $F^{-1}(R_e)$ is an $F^{-1}(r_e)$-bundle over $R_e$. 
Moreover, $F^{-1}(R_e)\cap F^{-1}(N(S_e))$ is an $F^{-1}(r_e)$-bundle over $R_e\cap N(S_e)$. 
Since $R_e\cap N(S_e)$ deformation retracts onto $S_e$, by the multiplicativity of the Euler characteristic for fiber bundles, we have
\begin{align*}
\chi\bigl(Q\cup F^{-1}(N(S_e))\cup F^{-1}(R_e)\bigr)=
\bigl(\chi(R_e) - \chi(S_e) \bigr) \chi(F^{-1}(r_e))
+ 
\chi\bigl(Q\cup F^{-1}(N(S_e))\bigr).
\end{align*}

We further compute $\chi\bigl(Q\cup F^{-1}(N(S_e))\bigr)$ according to the type of $e$. 
Independently of the type of $e$, we now have 
$$
\chi\bigl(Q\cup F^{-1}(N(S_e))\bigr)
=
\chi(Q)
+
\chi(F^{-1}(N(S_e)))-\chi\bigl(Q\cap F^{-1}(N(S_e))\bigr). 
$$

We first suppose that $e\in E(0,+)$. 
Let $\rho_e\colon N(S_e)\to S_e$ be the projection. 
By the local normal form of $F$ around the singular points of $F|_{\partial N}$ in \Cref{prop2.4}, the map $\rho_e\circ F|_{F^{-1}(N(S_e))}\colon F^{-1}(N(S_e))\to S_e$ is a $D^{n-k+1}$-bundle over $S_e$.
Moreover, $Q\cap F^{-1}(N(S_e))=\emptyset$. 
Thus, by the multiplicativity of the Euler characteristic for fiber bundles, we have 
$$
\chi\bigl(Q\cup F^{-1}(N(S_e))\bigr)
=
\chi(Q)
+
\chi(S_e). 
$$
Suppose next that $e\in E(n-k-1,+)$. 
Again, let $\rho_e\colon N(S_e)\to S_e$ be the projection. 
By the local normal form in \Cref{prop2.4}, the maps $\rho_e\circ F|_{F^{-1}(N(S_e))}\colon F^{-1}(N(S_e))\to S_e$ and $\rho_e\circ F|_{Q\cap F^{-1}(N(S_e))}\colon Q\cap F^{-1}(N(S_e))\to S_e$ are fiber bundles. 
Let $A$ and $B$ denote their fibers, respectively. 
The local normal form shows that $A$ is obtained from $B$ by attaching an $(n-k+1)$-dimensional $(n-k)$-handle. 
Hence, 
$$
\chi(A)-\chi(B)=(-1)^{n-k}. 
$$
Therefore, by the multiplicativity of the Euler characteristic for fiber bundles, we have 
$$
\chi\bigl(Q\cup F^{-1}(N(S_e))\bigr)=\chi(Q)+(-1)^{n-k}\chi(S_e). 
$$

Finally, consider the case where $e\in E(0,-)\cup E(n-k-1,-)$. 
Then, $Q\cup F^{-1}(N(S_e))$ deformation retracts onto $Q$.
It follows from the local normal forms in \Cref{prop2.4}. 
Thus, 
$$
\chi\bigl(Q\cup F^{-1}(N(S_e))\bigr)=\chi(Q). 
$$
Summing these contributions over all edges gives the first formula.

We next prove the second equation for $\chi(F^{-1}(r_e))$. 
We examine how the regular fiber of $F$ changes when one crosses a component of the singular value set of $F|_{\partial N}$.  
For $(0,+)$, the topological type of the regular fiber changes by adding one $D^{n-k}$-component. 
For $(n-k-1,+)$, it changes by filling in a boundary component diffeomorphic to $S^{n-k-1}$ with $D^{n-k}$.  
For $(0,-)$, it changes by removing $\Int B^{n-k}$.  
For $(n-k-1,-)$, it changes by deleting one $D^{n-k}$-component.  
Thus, to compute $\chi(F^{-1}(r_e))$, it is enough to trace the changes from $v_0$ to the vertex of $G_F$ corresponding to $R_e$. 
Therefore, the second formula follows.
\qed

\vspace{10pt}

In particular, we have the following formula modulo $2$.

\begin{cor}\label{cor3.15}
Let $N$ be a compact, connected, $n$-dimensional manifold with boundary, and let $F\colon N\to\mathbb{R}^k$ be a submersion with definite folds whose restriction to the boundary is an image simple fold map, where $n>k$. 
Then, we have 
$$
\chi(N)
\equiv
\sum_{e\in E}\big
(\chi(R_e)
+
\chi(S_e)\big)d(r_e)
+
\sum_{e\in E(0,+)\sqcup E(n-k-1,+)}\chi(S_e)
\mod 2, 
$$
where $d(r_e)$ is the depth of the vertex corresponding to $R_e\ni r_e$. 
\end{cor}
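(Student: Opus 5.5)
Reduce both formulas of \Cref{thm3.14} modulo $2$. Signs disappear mod $2$, so the first formula becomes
$$
\chi(N)\equiv\sum_{e\in E(0,+)\sqcup E(n-k-1,+)}\chi(S_e)+\sum_{e\in E}\bigl(\chi(R_e)+\chi(S_e)\bigr)\chi(F^{-1}(r_e))\mod 2 .
$$
Here I have used $(-1)^{n-k}\equiv 1$ and $-\chi(S_e)\equiv\chi(S_e)$. It therefore suffices to show that
$$
\chi(F^{-1}(r_e))\equiv d(r_e)\mod 2
$$
for every $e\in E$.

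For this, use the second formula of \Cref{thm3.14}. Mod $2$, the signs $-1$ and $(-1)^{n-k-1}$ can be dropped, giving
$$
\chi(F^{-1}(r_e))\equiv d_{(0,+)}(r_e)+d_{(n-k-1,-)}(r_e)+d_{(0,-)}(r_e)+d_{(n-k-1,+)}(r_e)\mod 2 .
$$
The four labels partition the edge set $E$ as in (3.2). Hence the right-hand side counts every edge on the path from $v_0$ to the vertex of $R_e$. Since $\tilde G_F$ is a tree, this path is unique and its length is, by definition, the depth $d(r_e)$. Substituting this congruence into the reduced first formula gives the stated congruence.

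The one step to watch is the degenerate case $n-k-1=0$. There the labels $(0,\pm)$ and $(n-k-1,\pm)$ coincide, so (3.2) is no longer a disjoint union of four distinct classes. In that case I would recheck the fiber-change description in the proof of \Cref{thm3.14}. Each crossing of a component of the singular value set adds or removes one $D^1$-component, or fills or cuts an arc. In every one of these moves the Euler characteristic of the regular fiber changes by $\pm1$. So the parity of $\chi(F^{-1}(r_e))$ still changes at each edge, starting from $\chi(\emptyset)=0$ over $v_0$, and the congruence $\chi(F^{-1}(r_e))\equiv d(r_e)$ continues to hold. Apart from this case the argument is a direct reduction mod $2$.
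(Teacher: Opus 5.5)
Your proof is correct and follows the paper's own route: the paper states this corollary as an immediate mod~$2$ reduction of \Cref{thm3.14}. Signs drop out of the first formula, and the second formula gives $\chi(F^{-1}(r_e))\equiv d(r_e)$ because the four label classes together count every edge on the unique path from $v_0$. Your separate check of the case $n-k-1=0$, that each wall-crossing changes the fiber's Euler characteristic by $\pm1$, is a valid safeguard, though the paper does not need it: the labels are meant to distinguish the two geometrically different definite types (index $0$ versus maximal index).
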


By further using a property of definite fold maps, we have the following result.

\begin{cor}
Let $N$ be a compact, $(2l+1)$-dimensional manifold and let $F\colon N\to\mathbb{R}^3$ be a submersion with definite folds whose restriction to the boundary is an image simple fold map, where $l\geq 2$. 
If $\partial N$ is simply connected, then we have
$$
\chi(N)\equiv\sum_{e\in E}\chi(R_e)d(r_e) \mod 2, 
$$
where the definitions of $R_e$ and $r_e$ are the same as in \Cref{thm3.14}. 
\end{cor}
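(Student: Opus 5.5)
The plan is to start from the mod~$2$ formula of \Cref{cor3.15} and show that every term involving $\chi(S_e)$ is even. Specialising \Cref{cor3.15} to $n=2l+1$ and $k=3$ gives
$$
\chi(N)\equiv\sum_{e\in E}\chi(R_e)d(r_e)+\sum_{e\in E}\chi(S_e)d(r_e)+\sum_{e\in E(0,+)\sqcup E(2l-3,+)}\chi(S_e)\mod 2 .
$$
The asserted congruence is the first sum alone. So it suffices to prove that $\chi(S_e)$ is even for every edge $e\in E$.

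First I will identify $S_e$. Since $\partial N$ is closed and $S(F|_{\partial N})$ is a closed submanifold of $\partial N$, its image is compact. Because $F|_{\partial N}$ is image simple, $F|_{S(F|_{\partial N})}$ is an embedding, so each $S_e$ is a closed surface embedded in $\mathbb{R}^3$. By the Jordan--Brouwer separation theorem (already used for the tree lemma), $S_e$ separates $\mathbb{R}^3$ into two components. It is the boundary of the bounded one, so it is two-sided and hence orientable. Then $S_e\cong\Sigma_g$ for some $g$, and $\chi(S_e)=2-2g\equiv 0 \bmod 2$. This kills the second and third sums, which gives the claim.

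The main point to check is where the hypotheses ``$\partial N$ simply connected'' and $l\geq 2$ enter. The argument above appears to need only orientability of the embedded surfaces, so I expect those hypotheses to be an alternative route. In that route, one invokes Saeki's structure theorem for definite (special generic) fold maps of simply connected closed manifolds of dimension $2l\geq 4$ into $\mathbb{R}^3$. The Stein factorization then embeds into $\mathbb{R}^3$ as a simply connected compact $3$-manifold $W$ with $\partial W=f(S(f))$, and one can conclude that every component of the singular value set is a $2$-sphere, so $\chi(S_e)=2$. I would present the orientability argument as the main proof and remark that the stronger conclusion $S_e\cong S^2$ follows from the cited property of definite fold maps. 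Either way, the only delicate step is verifying that each $S_e$ is a closed two-sided surface, and this is immediate from compactness, image simplicity and Jordan--Brouwer.
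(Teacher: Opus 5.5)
Your proof is correct, but it reaches the key fact by a different route from the paper.

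\textbf{The paper's route.} It invokes \cite{Sak} on definite fold maps of simply connected closed manifolds into $\mathbb{R}^3$. From this it concludes that every component $S_e$ of the singular value set is a $2$-sphere, so $\chi(S_e)=2$, and the $\chi(S_e)$-terms in \Cref{cor3.15} vanish mod $2$. This is the only place where $\pi_1(\partial N)=1$ and $l\geq 2$ are used.

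\textbf{Your route.} You need only the parity of $\chi(S_e)$. You get it from the elementary fact that a closed surface embedded in $\mathbb{R}^3$ bounds a compact region and is therefore orientable. Equivalently, and without orientability, $\chi(S_e)=2\chi(D_e)$ for the compact $3$-dimensional region $D_e$ it bounds, by doubling $D_e$.

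\textbf{What each buys.} Your route is more general. It shows the congruence follows from \Cref{cor3.15} without assuming $\partial N$ simply connected or $l\geq 2$. The same reasoning in fact works for every target dimension $k$:
\begin{itemize}
\item for $k$ even, each $S_e$ is a closed odd-dimensional manifold, so $\chi(S_e)=0$;
\item for $k$ odd, $\chi(S_e)=2\chi(D_e)$.
\end{itemize}
The paper's route gives the stronger identification $S_e\cong S^2$. This pins down the exact value $\chi(S_e)=2$, which could be fed into the integral formula of \Cref{thm3.14}. For the mod $2$ statement, however, it is more than is needed.
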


\proof
By the assumptions, it follows from \cite{Sak} that each $S_e$ is a $2$-dimensional sphere. 
Thus, we have the desired formula by \Cref{cor3.15}. 
\qed

\vspace{10pt}

The following results on non-singular extensions follow immediately from \Cref{thm3.14}.

\begin{cor}
Let $M$ be an $m$-dimensional closed manifold and let $f\colon M\to\mathbb{R}^k$ be an image simple fold map whose singular points are definite fold points, where $m\geq k$ and $m$ is even. 
If $f$ admits a non-singular extension $F\colon N\to\mathbb{R}^k$, then we have
\begin{align*}
\frac{1}{2}\chi(M)=\sum_{e\in E(0,+)} \chi(S_e) + \sum_{e\in E(m-k,+)} (-1)^{m-k+1}\chi(S_e)+ \sum_{e\in E} (\chi(R_e)-\chi(S_e))\chi(F^{-1}(r_e)).
\end{align*}
where the notation is the same as in \Cref{thm3.14}. 
\end{cor}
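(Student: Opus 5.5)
The plan is to apply \Cref{thm3.14} to $F$ and then eliminate $\chi(N)$ with a doubling argument. Here $F$ is the given non-singular extension, so $N$ is a compact $(m+1)$-dimensional manifold with $\partial N=M$. Our first task is to check that $F$ is a submersion with definite folds. It is a submersion by definition. Its restriction $F|_{\partial N}=f$ is a fold map all of whose singular points are definite, hence a definite fold map. Since $f$ is image simple, \Cref{thm3.14} applies with $n=m+1$. This gives
$$
\chi(N)=\sum_{e\in E(0,+)} \chi(S_e) + \sum_{e\in E(m-k,+)} (-1)^{m+1-k}\chi(S_e)+ \sum_{e\in E} \bigl(\chi(R_e)-\chi(S_e)\bigr)\chi(F^{-1}(r_e)),
$$
because $n-k-1=m-k$ and $(-1)^{n-k}=(-1)^{m-k+1}$. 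This is exactly the right-hand side of the claimed identity, so it remains to show that $\chi(N)=\frac{1}{2}\chi(M)$.

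For that we use the standard fact about compact manifolds with boundary. Let $DN=N\cup_{M}N$ be the double of $N$; it is a closed manifold of dimension $n=m+1$. Since $m$ is even, $n$ is odd, and so $\chi(DN)=0$. On the other hand, Mayer--Vietoris, applied after thickening $M$ to a collar, gives $\chi(DN)=2\chi(N)-\chi(M)$. Hence $\chi(N)=\frac12\chi(M)$, and substituting this into the formula above finishes the proof.

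There is one point to watch. \Cref{thm3.14} is stated for connected $N$, and the definition of a non-singular extension requires $N$ to be connected, so connectedness is automatic. $M$, however, need not be connected. This causes no difficulty: the doubling argument works for disconnected boundary, and the graph $G_F$ is built from $f=F|_{\partial N}$ exactly as in the theorem.

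We do not expect a real obstacle. The only care needed is the index bookkeeping: the labels $(n-k-1,\pm)$ in \Cref{thm3.14} become $(m-k,\pm)$, and the parity of $m$ is what makes $\chi(DN)$ vanish.
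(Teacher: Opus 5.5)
Your proof is correct and follows the paper's argument: apply \Cref{thm3.14} with $n=m+1$, and use the parity of $m$ to relate $\chi(N)$ and $\chi(M)$. Your doubling argument gives the right relation, $\chi(M)=2\chi(N)$; the paper's one-line proof writes it as $\chi(N)=2\chi(M)$, which is a slip (it already fails for $N=D^3$), and the displayed identity with $\frac{1}{2}\chi(M)$ is consistent with your version.
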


\proof
Since $m$ is even, we have $\chi(N)=2\chi(M)$. 
Using this relation and \Cref{thm3.14}, we obtain the desired equation. 
\qed

\section*{Aknowledgement}
The author would like to thank Osamu Saeki, Noriyuki Hamada, and Naoki Kitazawa for their useful discussions. 
This work has been partially supported by JSPS KAKENHI Grant Number 26KJ1778, and by WISE program (MEXT) at Kyushu University.

\end{document}